\def\E{{\symb E}}
\def\P{{\symb P}}
\def\cD{{\cscr D}}
\def\cA{{\cscr A}}
\def\cM{{\cscr M}}
\def\cV{{\cscr V}}
\def\cC{{\cscr C}}
\def\span{\mathop{\mathrm{span}}}
\newtheorem{assumption}[lemma]{Assumption}
\definecolor{darkred}{rgb}{0.9,0.1,0.1}
\begin{document}

\title{On Malliavin's proof of H\"ormander's theorem}
\author{Martin Hairer}
\institute{Mathematics Department, University of Warwick\\ \email{M.Hairer@Warwick.ac.uk}}
\maketitle

\begin{abstract}
The aim of this note is to provide a short and self-contained proof of H\"ormander's
theorem about the smoothness of transition probabilities for a diffusion under 
H\"ormander's ``brackets condition''. While both the result and the technique of
proof are well-known, the exposition given here is novel in two aspects.
First, we introduce Malliavin calculus in an ``intuitive'' way, without using Wiener's
chaos decomposition. While this may make it difficult to prove some of the standard
results in Malliavin calculus (boundedness of the derivative operator in $L^p$ spaces
for example), we are able to bypass these and to replace them by weaker results that
are still sufficient for our purpose. Second, we introduce a notion of ``almost implication''
and ``almost truth'' (somewhat similar to what is done in fuzzy logic) which allows,
once the foundations of Malliavin calculus are laid out, to give a very short and streamlined 
proof of H\"ormader's theorem that focuses on the main ideas without clouding it by
technical details.
\\[3mm]
\noindent \textit{Dedicated to the memory of Paul Malliavin.}
\end{abstract}

\section{Introduction}

One of the main tools in many results on the convergence to equilibrium of Markov processes is the 
presence of some form of ``smoothing'' for the semigroup. For example, if a Markov operator
$\CP$ over a Polish space $\CX$ possesses the strong Feller property (namely it maps $\CB_b(\CX)$, the space
of bounded measurable functions into $\CC_b(\CX)$, the space of 
bounded continuous functions), then one can conclude that any two ergodic invariant measures for $\CP$
must either coincide or have disjoint topological supports. Since the latter can often been ruled out by some form of
controllability argument, we see how the strong Feller property is the basis for many proofs of ergodicity.

It is then desirable to have criteria that are as simple to formulate as possible and that ensure 
that the Markov semigroup associated to a given Markov process has some smoothing property.
One of the most natural classes of Markov processes are given by diffusion processes and this will be
the object of study in this note. Our main object of study is a stochastic differential equation of the form
\begin{equ}[e:SDE]
dx = V_0(x)\,dt + \sum_{i=1}^m V_i(x)\circ dW_i\;,
\end{equ}
where the $V_i$'s are smooth vector fields on $\R^n$ and the $W_i$'s are independent standard Wiener processes. 
In order to keep all arguments as straightforward as possible, we will assume throughout this note that
these vector fields assume the coercivity
assumptions necessary so that the solution flow to \eref{e:SDE}
 is smooth with respect to its initial condition and that all of its derivatives have moments of all orders.
 This is satisfied for example if the $V_i$'s are $\CC^\infty$ with bounded derivatives of all orders.
 
\begin{remark}
We wrote \eref{e:SDE} as a Stratonowich equation on purpose. This is for two reasons: at a pragmatic level,
this is the ``correct'' formulation which allows to give a clean statement of H\"ormander's theorem (see Definition~\ref{def:HorBrack} below).
At the intuitive level, the question of smoothness of transition probabilities is related to that of
the extent of their support. The Stroock-Varadhan support theorem \cite{SVSupp} characterises this as consisting precisely
of the closure of the set of points that can be reached if the Wiener processes $W_i$ in \eref{e:SDE} are replaced by arbitrary smooth
control functions. This would not be true in general for the It\^o formulation. 
\end{remark}
 
It is well-known that if the equation \eref{e:SDE} is elliptic namely if, for every point $x \in \R^n$, the linear
span of $\{V_i(x)\}_{i=1}^m$ is all of $\R^n$, then the law of the solution to \eref{e:SDE} has a smooth density with
respect to Lebesgue measure. Furthermore, the corresponding Markov semigroup $\CP_t$ defined by
\begin{equ}
\CP_t \phi(x_0) = \E_{x_0} \phi(x_t)\;,
\end{equ}
is so that $\CP_t \phi$ is smooth, even if $\phi$ is only bounded measurable. (Think of the solution to the heat
equation, which corresponding to the simplest case where $V_0= 0$ and the $V_i$ form an orthonormal basis of $\R^n$.)
In practice however, one would like to obtain a criterion that also applies to some equations where the ellipticity assumption fails.
For example, a very well-studied model of equilibrium statistical mechanics is given by the Langevin equation:
\begin{equ}
dq = p\,dt \;,\qquad dp = -\nabla V(q)\,dt - p\,dt + \sqrt{2T}\,dW(t)\;,
\end{equ}
where $T>0$ should be interpreted as a temperature, $V\colon \R^n \to \R_+$ is a sufficiently coercive potential function,
and $W$ is an $n$-dimensional Wiener process. Since solutions to this equation take values in $\R^{2n}$ (both $p$ and $q$
are $n$-dimensional), this is definitely not an elliptic equation. At an intuitive level however, one would expect it to have some
smoothing properties: smoothing reflects the spreading of our uncertainty about the position of the solution and the 
uncertainty on $p$ due to the presence of the noise terms gets instantly transmitted to $q$ via the equation $dq = p\,dt$.

In a seminal paper \cite{Hor67AM}, H\"ormander was the first to formulate the ``correct'' non-degeneracy condition 
ensuring that solutions to \eref{e:SDE} have a smoothing effect. 
To describe this non-degeneracy condition, recall that the Lie bracket $[U,V]$ between two vector fields $U$ and $V$ 
on $\R^n$ is the vector field defined by
\begin{equ}{}
[U,V](x) = DV(x)\,U(x) - DU(x)\,V(x)\;,
\end{equ}
where we denote by $DU$ the derivative matrix given by $(DU)_{ij} = \d_j U_i$.
This notation is consistent with the usual notation for the commutator between two linear operators since,
if we denote by $A_U$ the first-order differential operator acting on smooth functions $f$ by
$A_U f(x) = \scal{U(x),\nabla f(x)}$, then we have the identity $A_{[U,V]} = [A_U,A_V]$.

With this notation at hand, we give the following definition:

\begin{definition}\label{def:HorBrack}
Given an SDE \eref{e:SDE}, define a collection of vector fields $\cV_k$ by
\begin{equ}
\cV_0 = \{V_i\,:\, i > 0\}\;,\quad \cV_{k+1} = \cV_k \cup \{[U,V_j]\,:\, U\in \cV_k\;\&\; j \ge 0\}\;.
\end{equ}
We also define the vector spaces $\cV_k(x) = \span\{V(x)\,:\, V \in \cV_k\}$. We say that \eref{e:SDE}
 satisfies the \textit{parabolic H\"ormander condition} if $\bigcup_{k \ge 1} \cV_k(x) = \R^n$ for every $x \in \R^n$.
\end{definition}

With these notations, H\"ormander's theorem can be formulated as

\begin{theorem}\label{theo:Hormander}
Consider \eref{e:SDE} and assume that all vector fields have bounded derivatives of all orders. If it satisfies the parabolic 
H\"ormander condition, then its solutions admit a smooth density with respect to Lebesgue measure and the corresponding Markov
semigroup maps bounded functions into smooth functions. 
\end{theorem}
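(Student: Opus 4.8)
The plan is to follow Malliavin's probabilistic strategy. Let $x_t$ denote the solution of \eref{e:SDE} started at $x_0$, and let $J_{0,t}$ be its Jacobian (the derivative flow with respect to the initial condition), which by assumption has moments of all orders. The first step is the standard observation that, in order to conclude that the law of $x_t$ has a smooth density and that $\CP_t$ maps bounded functions to smooth ones, it suffices to show that for every $p \ge 1$ there is a bound of the form
\begin{equ}
\bigl|\E\, D^k\bigl(\phi(x_t)\bigr)\bigr| \le C_{k,p}\,\norm{\phi}_\infty
\end{equ}
obtained by repeated ``integration by parts'' on Wiener space. Concretely, I would introduce the Malliavin derivative $\CD$ (in the intuitive sense promised in the introduction, i.e. as the derivative of $x_t$ with respect to perturbations of the driving noise), note the explicit formula $\CD_s^{(i)} x_t = J_{s,t}\,V_i(x_s)$ for $s<t$, and define the Malliavin matrix
\begin{equ}
\cM_t = \sum_{i=1}^m \int_0^t J_{s,t}\,V_i(x_s) \otimes J_{s,t}\,V_i(x_s)\,\dd s\;.
\end{equ}
A now-routine argument shows that the desired integration-by-parts bounds, and hence the theorem, follow once one knows that $\cM_t$ is a.s. invertible and that $\norm{\cM_t^{-1}}$ has moments of all orders.

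The heart of the proof is therefore the quantitative non-degeneracy estimate: under the parabolic H\"ormander condition, for every $p\ge 1$ there is $C_p$ with $\E\,\norm{\cM_t^{-1}}^p \le C_p$. I would prove this by showing that for every $p$ there is $\eps_0$ and $C$ such that
\begin{equ}
\P\Bigl(\inf_{\norm{\xi}=1}\scal{\xi,\cM_t\,\xi} < \eps\Bigr) \le C\,\eps^p
\end{equ}
for all $\eps \le \eps_0$. Fixing a unit vector $\xi$, smallness of $\scal{\xi,\cM_t\xi}$ forces $\scal{\xi,J_{s,t}V_i(x_s)}$ to be small (in an $L^2$-in-time sense) for every $i$. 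Rewriting $\scal{\xi,J_{s,t}V(x_s)}$ for a vector field $V$ along the trajectory and differentiating in $s$, one finds via It\^o's formula (in Stratonovich form, which is why \eref{e:SDE} was written that way) that its local martingale part and drift involve precisely $\scal{\xi,J_{s,t}[V,V_j](x_s)}$. Hence smallness of the quantity associated with $V$ propagates — thanks to a quantitative ``Norris lemma'' controlling how small a semimartingale's quadratic variation and bracket must be when the semimartingale itself is small — to smallness of the quantities associated with all the brackets $[V,V_j]$. Iterating through the hierarchy $\cV_0\subset\cV_1\subset\cdots$, after finitely many steps (using the H\"ormander condition and compactness of the sphere, plus continuity of the $V\in\cV_k$ and a bound on $\norm{J_{0,t}^{-1}}$ at a fixed small time) we conclude that $\scal{\xi,w(x_0)}$ is small for a collection of vectors $w$ spanning $\R^n$, forcing $\norm{\xi}$ to be small — a contradiction with $\norm{\xi}=1$ once $\eps$ is small enough. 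A union bound over a fine net on the sphere, together with Lipschitz control of $\xi\mapsto\scal{\xi,\cM_t\xi}$, upgrades this to the stated tail estimate.

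The main obstacle is this last, combinatorial-analytic step: making the iteration through the bracket hierarchy quantitative and uniform. Each passage from a vector field to its brackets loses a power of $\eps$ (and some stochastic factors with finite moments), so one must track how the exponents degrade and check that, since the H\"ormander condition is satisfied at level $k$ for some finite $k$ depending only on the vector fields, one still ends up with an arbitrarily large power of $\eps$ by starting the induction at a sufficiently small $\eps$-scale. This is exactly where the paper's device of ``almost implication'' and ``almost truth'' is designed to help: it lets one say informally that ``$\scal{\xi,J V(x)}$ small $\Rightarrow$ $\scal{\xi,J [V,V_j](x)}$ small'' and chain such statements without carrying the bookkeeping of constants and exponents explicitly at each step, so I would set up that formalism first and then run the induction within it. The remaining ingredients — the Norris-type lemma, the moment bounds on $J$, $J^{-1}$ and their derivatives, and the reduction of smoothness of the density to moments of $\cM_t^{-1}$ — are essentially standard and I would invoke or sketch them rather than belabour them.
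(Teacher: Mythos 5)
Your proposal follows essentially the same route as the paper: reduce to moment bounds on the inverse Malliavin matrix via an integration-by-parts formula, reduce those moment bounds to a tail estimate on $\inf_{|\xi|=1}\scal{\xi,\cM_t\xi}$, and obtain that tail estimate by propagating ``smallness'' through the bracket hierarchy via a quantitative Norris lemma, chained together with the ``almost implication'' bookkeeping. The one technical point you gloss over is that the process $s\mapsto\scal{\xi,J_{s,t}V(x_s)}$ you propose to differentiate is \emph{not} adapted (since $J_{s,t}$ looks into the future), which is what Norris's lemma requires; the paper circumvents this by factoring $J_{s,t}=J_{0,t}J_{0,s}^{-1}$, pulling $J_{0,t}$ outside, and working instead with the \emph{reduced} Malliavin matrix $\cC_{0,t}=\int_0^t J_{0,s}^{-1}V(x_s)V^*(x_s)(J_{0,s}^{-1})^*\,ds$, so that the relevant process $Z_F(s)=\scal{\eta,J_{0,s}^{-1}F(x_s)}$ is adapted and satisfies a clean SDE whose drift and martingale parts are $Z_{[F,V_0]}$ and $Z_{[F,V_k]}$. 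Once you make that substitution, your argument and the paper's coincide.
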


H\"ormander's original proof was formulated in terms of second-order differential operators and was
purely analytical in nature. Since the main motivation on the other hand was probabilistic and since, as we will see below,
H\"ormander's condition can be understood at the level of properties of the trajectories of \eref{e:SDE}, a more
stochastic proof involving the original stochastic differential equation was sought for. The breakthrough came
with Malliavin's seminal work \cite{Mal78SCO}, where he laid the foundations of what is now known as the 
``Malliavin calculus'', a differential calculus in Wiener space and used it to give a probabilistic proof of 
H\"ormander's theorem. This new approach proved to be extremely successful and soon a number of authors
studied variants and simplifications of the original proof \cite{Bismut1,Bismut2,KSAMI,KSAMII,KSAMIII,Nor86SMC}.
Even now, more than three decades after Malliavin's original work, his techniques prove to be sufficiently flexible
to obtain related results for a number of extensions of the original problem, including
for example SDEs with jumps \cite{MR1932281,MR2307057,MR2513114,Takeuchi}, 
infinite-dimensional systems \cite{MR953905,MR2152244,MR2257860,MR2259251,ErgodicBig}, and SDEs driven by Gaussian
processes other than Brownian motion \cite{MR2322701,CassFriz,HairPil}.

A complete rigorous proof of Theorem~\ref{theo:Hormander} goes somewhat beyond the scope of these notes.
However, we hope to be able to give a convincing argument showing why this result is true and what are the
main steps involved in its probabilistic proof. The aim in writing these notes was to be sufficiently self-contained so
that a strong PhD student interested in stochastic analysis would be able to fill in the missing gaps without
requiring additional ideas.
The interested reader can find the technical details required to make the proof rigorous
in \cite{Mal78SCO,KSAMI,KSAMII,KSAMIII,Nor86SMC,Nua95}. H\"ormander's original, completely different, 
proof using fractional integrations can be found
in \cite{Hor67AM}. A yet completely different functional-analytic proof using the theory of pseudo-differential operators
was developed by Kohn in \cite{Koh77} and can also be found in \cite{Hor85} or, in a slightly
different context, in the recent book \cite{MR2130405}.

The remainder of these notes is organised as follows. First, in Section~\ref{sec:heuristics} below, we will show why
it is natural that the iterated Lie brackets appear in H\"ormander's condition. Then, in Section~\ref{sec:Malliavin}, 
we will give
an introduction to Malliavin calculus, including in particular its integration by parts formula in Wiener space.
Finally, in Section~\ref{sec:Hormander}, we apply these tools to the particular case of smooth diffusion
processes in order to give a probabilistic proof of H\"ormander's theorem.

\subsection*{Acknowledgements}

{\small
These notes were part of a minicourse given at the University of Warwick
in July 2010; thanks are due to the organisers for this pleasant event.
I would also like to thank Michael Scheutzow and Hendrik Weber who carefully read a previous version of
the manuscript and pointed out several misprints. Remaining mistakes were probably added later on!
Finally, I would like to acknowledge financial support provided by the EPSRC through grants EP/E002269/1 and EP/D071593/1, 
the Royal Society through a Wolfson Research Merit Award, and the Leverhulme Trust through a Philip Leverhulme prize. 
}

\section{Why is it the correct condition?}
\label{sec:heuristics}

At first sight, the condition given in Definition~\ref{def:HorBrack} might seem a bit strange. Indeed,
the vector field $V_0$ is treated differently from all the others: it appears in the recursive definition of
 the $\cV_{k}$, but not in $\cV_0$. This can be understood in the following way: consider trajectories of
\eref{e:SDE} as curves in space-time. By the Stroock-Varadhan support theorem \cite{SVSupp}, the law of the solution to
\eref{e:SDE} on pathspace is supported by the closure of those smooth curves that, at every point $(x,t)$, 
are tangent to the hyperplane spanned
 by $\{\hat V_0, \ldots, \hat V_m\}$, where we set
\begin{equ}
\hat V_0(x,t) = \begin{pmatrix}V_0(x) \\ 1\end{pmatrix}\;,\qquad
\hat V_j(x,t) = \begin{pmatrix}V_j(x) \\ 0\end{pmatrix}\;,\quad j=1,\ldots,m\;.
\end{equ}
With this notation at hand, we could define $\hat \cV_k$ as in Definition~\ref{def:HorBrack}, but with
$\hat\cV_0 = \{\hat V_0,\ldots,\hat V_m\}$. Then, it is easy to check that H\"ormander's condition is
equivalent to the condition that $\bigcup_{k \ge 1} \hat \cV_k = \R^{n+1}$ for every $(x,t) \in \R^{n+1}$.

This condition however has a simple geometric interpretation. For a smooth manifold $\CM$, recall
that $E \subset T\CM$ is a smooth subbundle of dimension $d$ if $E_x \subset T_x\CM$ is a vector
space of dimension $d$ at every $x \in \CM$ and if the dependency $x \mapsto E_x$ is smooth.
(Locally, $E_x$ is the linear span of finitely many smooth vector fields on $\CM$.)
A subbundle is called \textit{integrable} if, whenever $U,V$ are vector fields on $\CM$ taking
values in $E$, their Lie bracket $[U,V]$ also takes values in $E$.

With these definitions at hand, recall the well-known
Frobenius integrability theorem from differential geometry:

\begin{theorem}\label{theo:Frobenius}
Let $\CM$ be a smooth $n$-dimensional manifold and let $E \subset T\CM$ be a smooth vector bundle
of dimension $d < n$. Then $E$ is integrable if and only if there (locally) exists a smooth foliation of $\CM$
into leaves of dimension $d$ such that, for every $x\in \CM$, the tangent space of the leaf passing through 
$x$ is given by $E_x$.
\end{theorem}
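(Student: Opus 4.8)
The plan is to establish the two implications separately. The implication ``foliation $\Rightarrow$ integrable'' is essentially a tautology: in a chart adapted to the foliation, in which the leaves are the slices $\{x^{d+1}=\text{const},\dots,x^n=\text{const}\}$, a vector field takes values in $E$ precisely when its last $n-d$ components vanish identically, and a glance at the formula $[U,V](x) = DV(x)\,U(x) - DU(x)\,V(x)$ shows that this property is preserved under Lie brackets. (Geometrically, $U$ and $V$ being tangent to each leaf means their flows preserve the leaves, hence so does the commutator of the flows, whose infinitesimal generator is $[U,V]$.)

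For the converse, which is the real content of the theorem, I would argue locally around a fixed point $x_0 \in \CM$. Pick a smooth local frame $\{X_1,\dots,X_d\}$ for $E$. Working in a coordinate chart and after a linear change of the coordinates, we may assume $E_{x_0}=\span\{\partial_{x^1},\dots,\partial_{x^d}\}$; then the $d\times d$ block formed by the first $d$ components of the $X_i$ is invertible at $x_0$, hence near $x_0$, so replacing the $X_i$ by suitable smooth linear combinations (which changes neither the span nor the integrability hypothesis) we may bring the frame into the triangular form
\begin{equ}
X_i = \partial_{x^i} + \sum_{j=d+1}^n a_i^j(x)\,\partial_{x^j}\;,\qquad i=1,\dots,d\;,
\end{equ}
with the $a_i^j$ smooth and vanishing at $x_0$. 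The key observation is now that, since the coefficients of $\partial_{x^1},\dots,\partial_{x^d}$ in each $X_i$ are the constants $0$ and $1$, the commutator $[X_i,X_k]$ is a combination of $\partial_{x^{d+1}},\dots,\partial_{x^n}$ only. But integrability forces $[X_i,X_k]=\sum_\ell c_\ell X_\ell$ for smooth functions $c_\ell$, and comparing the coefficients of $\partial_{x^1},\dots,\partial_{x^d}$ yields $c_\ell\equiv 0$. Hence the frame is commuting: $[X_i,X_k]=0$ for all $i,k$.

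The final step is to straighten a commuting frame simultaneously. Let $\Phi^i$ be the local flow of $X_i$; commuting vector fields have commuting flows, so $\Phi^i_{t}\circ\Phi^k_{s}=\Phi^k_{s}\circ\Phi^i_{t}$. With $S=\{x^1=\dots=x^d=0\}$ a transverse slice through $x_0$, define
\begin{equ}
\Psi(t^1,\dots,t^d,s) = \bigl(\Phi^1_{t^1}\circ\cdots\circ\Phi^d_{t^d}\bigr)(s)\;,\qquad s\in S\;,
\end{equ}
which is independent of the order of composition. Smooth dependence of ODE solutions on time and on the initial condition shows that $\Psi$ is smooth and that its differential at the origin is invertible, so $\Psi$ is a local diffeomorphism providing new coordinates $(t^1,\dots,t^d,s)$ near $x_0$ in which $X_i=\partial_{t^i}$. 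Then $E=\span\{\partial_{t^1},\dots,\partial_{t^d}\}$, and the desired foliation consists of the $d$-dimensional slices on which $s$ is held constant. I expect this last step to be the main obstacle: it rests entirely on the basic existence, uniqueness and smooth-parameter-dependence theory for integral curves of smooth vector fields, together with the short computation that commuting fields have commuting flows; everything else is bookkeeping with coordinates.
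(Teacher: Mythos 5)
The paper does not prove this statement: it is recalled as ``the well-known Frobenius integrability theorem from differential geometry'' and used only to motivate the heuristic discussion in Section~\ref{sec:heuristics}, so there is no in-paper argument to compare against. Your proof is correct and is the classical textbook argument. The easy direction is indeed a one-line coordinate computation with the bracket formula. For the converse, the three steps you identify are exactly the standard ones: normalise a local frame of $E$ to the triangular form $X_i = \partial_{x^i} + \sum_{j>d} a_i^j \partial_{x^j}$ using the invertibility of the leading $d\times d$ block; observe that this normalisation, together with involutivity, forces $[X_i,X_k]=0$ by matching the first $d$ coordinate components; and straighten the commuting frame simultaneously by composing the flows. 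One step you assert but should make explicit is that $\Psi_*\partial_{t^i} = X_i$ holds on a full neighbourhood, not merely at the origin: writing $\Psi(t^1,\ldots,t^i+\tau,\ldots,t^d,s)$ and using the commutativity of the flows to move $\Phi^i_\tau$ to the front of the composition gives $\Phi^i_\tau\bigl(\Psi(t,s)\bigr)$, and differentiating in $\tau$ at $\tau=0$ yields the claim. This is precisely where the commuting-flows computation is consumed. With that in place, the level sets of the last $n-d$ coordinates are disjoint $d$-dimensional submanifolds covering a neighbourhood of $x_0$, with tangent space $E$ at every point, which is the required local foliation.
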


In view of this result, H\"ormander's condition is not surprising. Indeed, if we define
$E_{(x,t)} = \bigcup_{k \ge 0} \hat \cV_k(x,t)$, then this gives us a subbundle of $\R^{n+1}$
which is integrable by construction of the $\hat \cV_k$. Note that the dimension of $E_{(x,t)}$ could
in principle depend on $(x,t)$, but since the dimension is a lower semicontinuous function, it will
take its maximal value on an open set. If, on some open set, this maximal value is less than $n+1$, 
then Theorem~\ref{theo:Frobenius}
tells us that, there exists a submanifold (with boundary) $\bar \CM \subset \CM$ of dimension strictly less than $n$
such that $T_{(y,s)} \bar \CM = E_{(y,s)}$ for every $(y,s) \in \bar \CM$. In particular, all 
the curves appearing in the Stroock-Varadhan support theorem and supporting the law of the solution to \eref{e:SDE}
must lie in $\bar \CM$ until they reach its boundary. As a consequence, since $\bar \CM$ is always
transverse to the sections with constant $t$, the solutions at time $t$ will, with positive probability, lie in
a submanifold of $\CM$ of strictly positive codimension. This immediately implies that the transition probabilities
cannot be continuous with respect to Lebesgue measure.

To summarise, if H\"ormander's condition fails on an open set, then transition probabilities cannot have a
density with respect to Lebesgue measure, thus showing that H\"ormander's condition is ``almost necessary''
for the existence of densities. The hard part of course is to show that it is a sufficient condition.
Intuitively, the reason is that H\"ormander's condition allows the solution to \eref{e:SDE} to ``move in all directions''.
Why this is so can be seen from the following interpretation of the Lie brackets.
Set
\begin{equ}
u_n(t) = {1\over n} \cos(n^2 t)\;,\qquad v_n(t) = {1\over n} \sin(n^2 t)\;,
\end{equ}
and consider the solution to 
\begin{equ}[e:area]
\dot x = U(x)\,\dot u_n(t) + V(x)\,\dot v_n(t)\;.
\end{equ}
We claim that, as $n \to \infty$, this converges to the solution to
\begin{equ}[e:LieLimit]
\dot u = {1\over 2}[U,V](x)\;.
\end{equ}
This can be seen as follows. If we integrate \eref{e:area} over a short time interval, we 
have the first order approximation
\begin{equ}
x(h) \approx x^{(1)}(h) \eqdef x_0 + U(x_0) u_n(h) + V(x_0) v_n(h)\;,
\end{equ}
which simply converges to $x_0$ as $n \to \infty$. To second order, however, we have
\begin{equs}
x(h) &\approx x_0 + \int_0^h \bigl(U(x^{(1)})\,\dot u_n + V(x^{(1)})\,\dot v_n\bigr)\,dt \\
&\approx x^{(1)}(h) + \int_0^h \bigl(DU(x_0) \,\dot u_n + DV(x_0)\,\dot v_n\bigr) \bigl(U(x_0) u_n + V(x_0) v_n\bigr)\,dt \\
&\approx x_0 + \int_0^h \bigl(DU(x_0) V(x_0) v_n \dot u_n + DV(x_0) U(x_0) u_n \dot v_n\bigr)\,dt\;.
\end{equs}
Here, we used the fact that the integral of $u_n \dot u_n$ (and similarly for $v_n \dot v_n$) is given by ${1\over 2} u_n^2$ and 
therefore converges to $0$ as $n \to 0$. Note now that over a period, $v_n(t) \dot u_n(t)$ averages
to $-{1\over 2}$ and $u_n(t) \dot v_n(t)$ averages to ${1\over 2}$, thus showing that one does indeed
obtain \eref{e:LieLimit} in the limit. This reasoning shows that, by combining motions in the directions
$U$ and $V$, it is possible to approximate, to within arbitrary accuracy, motion in the direction
$[U,V]$. 

A similar reasoning shows that if we consider
\begin{equ}
\dot x = U(x) + V(x)\,\dot v_n(t)\;,
\end{equ}
then, to lowest order in $1/n$, we obtain that as $n \to \infty$, $x$ follows
\begin{equ}
\dot x \approx U(x) + {1\over 2n}[U,V](x)\;.
\end{equ}
Combining these interpretations of the meaning of Lie brackets with the Stroock-Varadhan support
theorem, it suggests that, if H\"ormander's condition holds, then the support of the law of $x_t$ will
contain an open set around the solution at time $t$ to the deterministic system
\begin{equ}
\dot x = V_0(x)\;,\qquad x(0) = x_0\;.
\end{equ}
This should at least render it plausible that under these conditions, the law of $x_t$ has a density with
respect to Lebesgue measure. The aim of this note is to demonstrate how to turn this heuristic 
into a mathematical theorem with, hopefully,  a minimum amount of effort.

\begin{remark}
While H\"ormander's condition implies that the control system associated to \eref{e:SDE}
reaches an open set around the solution to the deterministic equation $\dot x = V_0(x)$,
it does \textit{not} imply in general that it can reach an open set around $x_0$. In particular,
it is \textit{not} true that the parabolic H\"ormander condition implies that \eref{e:SDE} can reach
every open set. A standard counterexample is given by
\begin{equ}
dx = -\sin(x)\,dt + \cos(x)\circ dW(t)\;,\qquad x_0 = 0\;,
\end{equ}
which satisfies H\"ormander's condition but can never exit the interval $[-\pi/2,\pi/2]$.
\end{remark}

\section{An Introduction to Malliavin calculus}
\label{sec:Malliavin}

In this section, we collect a number of tools that will be needed in the proof.
The main tool is the integration by parts formula from Malliavin calculus, as well
of course as Malliavin calculus itself. 

The main tool in the proof is the Malliavin calculus with its integration by part formula in Wiener space,
which was developed precisely in order to provide a probabilistic proof of Theorem~\ref{theo:Hormander}.
It essentially relies on the fact that the image of a Gaussian measure under a smooth submersion
that is  sufficiently integrable possesses a smooth density with respect to Lebesgue measure.
This can be shown in the following way. First, one observes the following fact:

\begin{lemma}\label{lem:IBP}
Let $\mu$
be a probability measure on $\R^n$ such that the bound
\begin{equ}
\Bigl|\int_{\R^n} D^{(k)}G(x)\,\mu(dx) \Bigr| \le C_k \|G\|_{\infty}\;,
\end{equ}
holds for every smooth bounded function $G$ and every $k \ge 1$. Then $\mu$ has a smooth density with 
respect to Lebesgue measure. \qed
\end{lemma}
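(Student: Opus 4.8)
The plan is to run the standard Fourier-analytic argument: write $\hat\mu(\xi) = \int_{\R^n} e^{i\scal{\xi,x}}\,\mu(dx)$ for the characteristic function of $\mu$, observe that the hypothesis is an integration-by-parts statement in disguise, and use it to show that $\hat\mu$ decays faster than any polynomial. From this one recovers a smooth density by Fourier inversion. The heuristic is simply that testing the bound against plane waves turns each derivative $D^{(k)}$ into a factor $|\xi|^k$, so the uniform bounds $C_k$ translate directly into polynomial decay of $\hat\mu$.

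Concretely, I would feed $G_\xi(x) = e^{i\scal{\xi,x}}$ into the hypothesis (extending it to complex-valued $G$ by splitting into real and imaginary parts, which only costs a factor $2$ in the constant). Since $D^{(k)}G_\xi(x) = i^k\,\xi^{\otimes k}\,e^{i\scal{\xi,x}}$ has norm $|\xi|^k$ and $\|G_\xi\|_\infty = 1$, the hypothesis gives
\[
|\xi|^k\,|\hat\mu(\xi)| = \Bigl|\int_{\R^n} D^{(k)}G_\xi(x)\,\mu(dx)\Bigr| \le 2C_k\;,\qquad \text{for all } \xi \in \R^n,\ k \ge 1\;.
\]
If one is uneasy about using test functions that do not vanish at infinity, one replaces $G_\xi$ by $\chi(x/R)\,G_\xi(x)$ with $\chi$ a smooth compactly supported bump equal to $1$ near the origin: in $D^{(k)}$ of this product, the terms in which at least one derivative hits the cutoff are $O(1/R)$ uniformly in $x$ and hence contribute $O(1/R)$ after integrating against the probability measure $\mu$, while the remaining term converges to $D^{(k)}G_\xi$ pointwise and boundedly, so dominated convergence recovers the same bound as $R \to \infty$. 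Combining the displayed estimate with the trivial bound $|\hat\mu| \le 1$ (splitting $\{|\xi|\le 1\}$ from $\{|\xi|>1\}$ and using the decay with $k = |\alpha|+n+1$ on the latter) shows that $\xi \mapsto \xi^{\alpha}\hat\mu(\xi)$ lies in $L^1(\R^n)$ for every multi-index $\alpha$; in particular $\hat\mu \in L^1$.

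It then remains to set $p(x) = (2\pi)^{-n}\int_{\R^n} e^{-i\scal{\xi,x}}\,\hat\mu(\xi)\,d\xi$ and verify two things. First, differentiation under the integral sign is legitimate since each $\xi^\alpha\hat\mu(\xi)$ is integrable, so $p \in \CC^\infty$ and every $\d^\alpha p$ is bounded (indeed decaying). Second, that $p$ is actually the density of $\mu$: the finite measure $p(x)\,dx$ has characteristic function $\hat\mu$ by the Fourier inversion theorem (applicable because $\hat\mu \in L^1$), and a finite measure on $\R^n$ is determined by its characteristic function, so $\mu(dx) = p(x)\,dx$; positivity of $p$ and $\int p = 1$ are then automatic. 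I do not expect a serious analytic obstacle here: the only points requiring care are the admissibility of the oscillatory test functions in the hypothesis (dealt with by the cutoff argument) and the two appeals to Fourier inversion and to uniqueness of characteristic functions, which are routine.
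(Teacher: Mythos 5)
Your proof is correct, but takes a different route from the paper's. The paper uses Sobolev duality: since $H^s \subset \CC_b$ for $s > n/2$, the hypothesis shows that each distributional derivative $D^{(k)}\mu$ defines a bounded functional on $H^s$ and so lies in $H^{-s}$; hence $\mu \in H^{\ell}$ for every $\ell$, and the embedding $H^\ell \subset \CC^k$ for $\ell > k + n/2$ gives smoothness. Your proof unpacks the same Fourier-analytic content explicitly: testing against plane waves turns the derivative bound into rapid decay of $\hat\mu$, and Fourier inversion finishes. The paper's route is shorter but invokes Sobolev embedding as a black box (and tacitly uses density of smooth bounded functions in $H^s$); yours is more elementary and self-contained, making transparent the passage from derivative bounds to frequency decay. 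One small remark: since the hypothesis allows any smooth bounded $G$, the plane wave $e^{i\scal{\xi,x}}$ (split into real and imaginary parts) is already admissible, so your cutoff step, while harmless, is not needed.
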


\begin{proof}
Let $s > n/2$ so that $H^s \subset \CC_b$ by Sobolev embedding.
By duality, the assumption then implies that every distributional derivative of $\mu$ belongs to the Sobolev space $H^{-s}$,
so that $\mu$ belongs to $H^\ell$ for every $\ell \in \R$.
The result then follows from the fact that $H^\ell \subset \CC^k$ as soon as $\ell > k + {n\over2}$.
\end{proof}

Consider now a sequence of $N$ independent Gaussian random 
variables $\delta w_k$ with variances $\delta t_k$ for $k \in \{1,\ldots, N\}$, as well as a smooth map
$X \colon \R^N \to \R^n$. We also denote by $w$ the collection $\{\delta w_k\}_{k \ge 1}$ and we
define the $n \times n$ matrix-valued map
\begin{equ}[e:defMall]
\cM_{ij}(w) = \sum_k \d_k X_i(w) \d_k X_j(w)\,\delta t_k\;,
\end{equ}
where we use $\d_k$ as a shorthand for the partial derivative with respect to the variable $\delta w_k$.
With this notation, $X$ being a submersion is equivalent to $\cM(w)$ being invertible for every $w$.

Before we proceed, let us introduce additional notation, which hints at the fact that one would really
like to interpret the $\delta w_k$ as the increments of  a Wiener process of an interval of length $\delta t_k$.
When considering a family $\{F_k\}_{k=1}^N$ of maps from $\R^N \to \R^n$, we identify it
with a continuous family $\{F_t\}_{t \ge 0}$, where 
\begin{equ}[e:identif]
F_t \eqdef F_k \;,\quad t \in [t_k, t_{k+1})\;,\qquad t_k \eqdef \sum_{\ell \le k} \delta t_\ell\;.
\end{equ}
Note that with this convention, we have $t_0 = 0$, $t_1 = \delta t_1$, etc. This is of course an abuse of
notation since $F_t$ is not equal to $F_k$ for $t = k$, but we hope that it will always be clear from the context
whether the index is a discrete or a continuous variable. We also set $F_t = 0$ for $t \ge t_N$.
With this notation, we have the natural identity
\begin{equ}
\int F_t \,dt = \sum_{k=1}^N F_k\, \delta t_k\;.
\end{equ}
Furthermore, given a smooth map $G\colon \R^N \to \R$, we will from now on denote by $\cD_t G$ the
family of maps such that $\cD_t G = \d_k G$ for $t \in  [t_k, t_{k+1})$, so that \eref{e:defMall} can be rewritten
as
\begin{equ}
\cM_{ij}(w) = \int \cD_t X_i(w) \cD_t X_j(w)\,dt\;.
\end{equ}
The quantity $\cD_t G$ is called the \textit{Malliavin derivative} of the random variable $G$.

The main feature of the Malliavin derivative operator $\cD_t$ suggesting that one expects it to be well-posed in
the limit $N \to \infty$ is that it was set up in such a way that it is invariant under refinement of the mesh $\{\delta t_k\}$ in the following way.
For every $k$, set
$\delta w_k = \delta w_k^- + \delta w_k^+$, where $\delta w_k^\pm$ are independent Gaussians
with variances $\delta t_k^\pm$ with $\delta t_k^- + \delta t_k^+ = \delta t_k$ and then identify maps
$G \colon \R^N \to \R$ with a map $\bar G \colon \R^{2N} \to \R$ by
\begin{equ}
\bar G(\delta w_1^\pm,\ldots,\delta w_N^\pm) = G(\delta w_1^- + \delta w_1^+,\ldots,\delta w_N^- + \delta w_N^+)\;.
\end{equ}
Then, for every $t \ge 0$, $\cD_t \bar G$ is precisely the map identified with $\cD_t G$.

With all of these notations at hand, we then have the following result:

\begin{theorem}\label{theo:smooth}
Let $X\colon \R^N \to \R$ be smooth, assume that $\cM(w)$ is invertible for every $w$ and that, for every $p>1$ and every $m \ge 0$, we have
\begin{equ}[e:boundintegr]
\E \bigl|\d_{k_1}\cdots \d_{k_m} X(w)\bigr|^p < \infty\;,\qquad 
\E \bigl\|\cM(w)^{-1}\bigr\|^p < \infty\;.
\end{equ}
Then the law of $X(w)$ has a smooth density with respect to Lebesgue measure. Furthermore, the derivatives of the law of 
$X$ can be bounded from above by expressions that depend only on the bounds \eref{e:boundintegr}, but are independent
of $N$, provided that $\sum \delta t_k = T$ remains fixed.
\end{theorem}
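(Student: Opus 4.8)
The plan is to verify the hypothesis of Lemma~\ref{lem:IBP} for $\mu$ the law of $X(w)$: I will show that for every $k\ge1$ there is a constant $C_k$, depending only on $n$, $k$, $T$ and the bounds \eref{e:boundintegr}, with $|\E\,(D^{(k)}G)(X(w))|\le C_k\,\norm{G}_\infty$ for every smooth bounded $G\colon\R^n\to\R$; Theorem~\ref{theo:smooth} and the assertion about $N$-independence then follow by inspecting the construction. The two tools are the chain rule for $\cD_t$ and a finite-dimensional integration by parts formula on Gaussian space.

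First I would record the chain rule $\cD_t(G(X(w))) = \sum_i(\partial_i G)(X(w))\,\cD_t X_i(w)$, which is immediate from the definition of $\cD_t$ as a partial derivative, contract it with $\cD_t X_j$, integrate in $t$, and use invertibility of $\cM$ to obtain the identity
\[
(\partial_i G)(X(w)) = \sum_j\bigl(\cM^{-1}(w)\bigr)_{ij}\int\cD_t\bigl(G(X(w))\bigr)\,\cD_t X_j(w)\,dt\;.
\]
Next I would establish the integration by parts formula: since $\delta w_k$ is centred Gaussian with variance $\delta t_k$, integrating by parts against its density (whose logarithmic derivative is $-\delta w_k/\delta t_k$) gives $\E[(\d_k F)H] = \E[F(\delta t_k^{-1}\,\delta w_k\,H - \d_k H)]$ for smooth $F,H$ with enough integrability; multiplying by $\delta t_k$ and summing over $k$ yields, for any family $\{H_t\}$,
\[
\E\Bigl[\int\cD_t F\cdot H_t\,dt\Bigr] = \E\bigl[F\cdot\cD^* H\bigr]\;,\qquad \cD^* H \eqdef \int H_t\,dw_t - \int\cD_t H_t\,dt\;,
\]
with $\int H_t\,dw_t\eqdef\sum_k H_k\,\delta w_k$ and $\int\cD_t H_t\,dt\eqdef\sum_k\d_k H_k\,\delta t_k$. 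Taking $F=G(X(w))$ and $H_t=\sum_j(\cM^{-1})_{ij}\cD_t X_j$ and combining with the previous identity produces $\E[(\partial_i G)(X(w))] = \E[G(X(w))\,\cD^* H]$, so that $|\E(\partial_i G)(X(w))|\le\norm{G}_\infty\,\E|\cD^* H|$; this settles $k=1$ once $\E|\cD^* H|$ is bounded.

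To bound $\E|\cD^* H|$ I would use the fact that $\cD^*$ is the adjoint of $\cD$, so that the moments of any $\cD^* u$ are controlled, via a finite-dimensional Meyer-type inequality, by the Hilbert--Schmidt-type norms $\int|u_t|^2\,dt$ and $\int\!\int|\cD_s u_t|^2\,ds\,dt$ of $u$ and its Malliavin derivative; expanding these for $u_t = \sum_j(\cM^{-1})_{ij}\cD_t X_j$ through $\cD_t\cM^{-1} = -\cM^{-1}(\cD_t\cM)\cM^{-1}$ and $\cD_t\cM_{ab} = \int(\cD_t\cD_s X_a\,\cD_s X_b + \cD_s X_a\,\cD_t\cD_s X_b)\,ds$ produces finitely many factors, each an entry of $\cM^{-1}$ or a partial derivative of $X$, whose product is integrable by Hölder's inequality and \eref{e:boundintegr}. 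For $k\ge2$ I would peel derivatives one at a time, using the displayed identity to rewrite $(\partial_i\Psi)(X)$ and then the integration by parts formula, obtaining inductively $\E[\partial_{i_1}\cdots\partial_{i_k}G(X(w))] = \E[G(X(w))\,Z_k]$, where $Z_1 = \cD^* H^{(i_1)}$ and $Z_j = \cD^*(Z_{j-1}\,H^{(i_j)})$ with $H^{(i)}_t = \sum_l(\cM^{-1})_{il}\cD_t X_l$; since each application of $\cD^*$ adds one Malliavin derivative, $Z_k$ is a finite sum of products of entries of $\cM^{-1}$, partial derivatives of $X$ up to order $k+1$, finitely many factors $\delta w_k$, and stochastic-integral contributions, so the same Meyer-type and Hölder estimates give $\E|Z_k|\le C_k<\infty$. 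Lemma~\ref{lem:IBP} then applies. The $N$-independence is built in: $\cD_t$, $\cM$ and hence $\cD^*$ are invariant under refinement of the mesh $\{\delta t_k\}$ (as noted before the theorem), so every bound above depends on the partition only through $T=\sum_k\delta t_k$.

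The main obstacle I anticipate is the Meyer-type control of $\cD^* u$ \emph{uniformly in $N$}: this is the finite-dimensional counterpart of the Burkholder--Davis--Gundy and Meyer inequalities, and although it is elementary for any fixed $N$, a crude term-by-term estimate of $\int u_t\,dw_t = \sum_k u_k\,\delta w_k$ blows up as the mesh is refined, so one must work with the Hilbert--Schmidt norms in the time variable (and, in the spirit of the abstract, one only needs a weakened form of these inequalities, just strong enough to close the present estimate rather than the full boundedness of $\cD$ and $\cD^*$ on $L^p$). A close second is the bookkeeping in the iteration --- tracking the combinatorial growth of the number of summands in $Z_k$ and verifying that every factor genuinely lies in every $L^p$, so that Hölder with exponents adapted to the number of factors closes the bound.
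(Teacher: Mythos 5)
Your skeleton matches the paper's exactly: verify Lemma~\ref{lem:IBP} by combining the chain-rule identity $(\partial_i G)(X)=\sum_j(\cM^{-1})_{ij}\int\cD_t(G(X))\,\cD_t X_j\,dt$ with the discrete integration-by-parts formula to produce $\E[\partial_{i_1}\cdots\partial_{i_k}G(X)]=\E[G(X)\,Z_k]$ with $Z_k$ built out of iterated Skorokhod integrals of $\cM^{-1}$ and derivatives of $X$, and then bound $\E|Z_k|$. The paper does precisely this, differentiating $\cM^{-1}$ via $\cD_t\cM^{-1}=-\cM^{-1}(\cD_t\cM)\cM^{-1}$ as you propose.

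The genuine gap is at the crux: you close the estimate by invoking a ``finite-dimensional Meyer-type inequality'' that controls moments of $\cD^*u$ by the Hilbert--Schmidt-type norms $\int|u_t|^2\,dt$ and $\int\!\int|\cD_s u_t|^2\,ds\,dt$. That is the optimal $L^p$ bound on the Skorokhod integral, and the paper explicitly declines to use it --- the remark after Proposition~\ref{prop:SkorLp} states that this extension of Burkholder--Davis--Gundy ``requires highly non-trivial harmonic analysis'' and cannot be reduced to an elementary calculation. What the paper substitutes is Proposition~\ref{prop:SkorLp}, a cruder but completely elementary bound, proved by a short bootstrap from the extended It\^o isometry (Proposition~\ref{prop:Ito}) via \eref{e:IBPsimple}, \eref{e:DInt} and H\"older: it controls $\E\bigl|\int F_s\,dw(s)\bigr|^p$ by $\sup_{t_0,\ldots,t_\ell}\E|\cD_{t_1}\cdots\cD_{t_\ell}F_{t_0}|^{2p}$ over finitely many iterated derivatives (with $\ell$ depending only on $p$), uniformly in $N$ for fixed $T$. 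Your final paragraph correctly flags this obstacle and even guesses that a ``weakened form'' must suffice, but the proof as written does not supply that weakened form; without it, the $k\ge2$ step --- where you need $L^p$ (not merely $L^2$) control of nested Skorokhod integrals --- does not close without appealing to Meyer. So the missing ingredient is precisely the statement and elementary proof of Proposition~\ref{prop:SkorLp}; once that is in hand, the rest of your argument, including the $N$-independence via mesh-refinement invariance, is the paper's.
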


Besides Lemma~\ref{lem:IBP}, the main ingredient of the proof of Theorem~\ref{theo:smooth} is the following
 integration by parts formula which lies at the heart
of the success of Malliavin calculus.
If $F_k$ and $G$ are square integrable functions
with square integrable derivatives, then we have the identity
\begin{equs}
\E \Bigl(\int \cD_t G(w)\, F_t(w)\,dt\Bigr) &= \E \sum_k \d_k G(w) F_k(w)\,\delta t_k \\
&= \E G(w) \sum_k F_k(w)\, \delta w_k - \E  G(w) \sum_k \d_k F_k(w)\,\delta t_k \\
&\eqdef \E \Bigl(G(w) \int F_t\,dw(t)\Bigr)\;,\label{e:IBPsimple}
\end{equs}
where we \textit{defined} the Skorokhod integral  $\int F_t\,dw(t)$ by the expression on the second line.
Note that in order to obtain \eref{e:IBPsimple}, we only integrated by parts with respect to the variables
$\delta w_k$.

\begin{remark}
The Skorokhod integral is really an extension of the usual It\^o integral, which is the justification
for our notation. This is because, if $F_t$ is an adapted process, then $F_{t_k}$ is independent 
of $\delta w_\ell$ for $\ell \ge k$ by definition. As a consequence, the term $\d_k F_k$ drops and
we are reduced to the usual It\^o integral.
\end{remark}

\begin{remark}
It follows immediately from the definition that one has the identity
\begin{equ}[e:DInt]
\cD_t \int F_s\,dw(s) = F_t + \int \cD_t F_s\,dw(s)\;.
\end{equ}
Formally, one can think of this identity as being derived from the Leibnitz rule, combined with 
the identity $\cD_t (dw(s)) = \delta(t-s)\,ds$, which is a kind of continuous analogue of the trivial
discrete identity $\d_k \delta w_\ell = \delta_{k\ell}$.
\end{remark}

This Skorokhod integral satisfies the following extension of It\^o's isometry:

\begin{proposition}\label{prop:Ito}
Let $F_k$ be square integrable functions
with square integrable derivatives, then
\begin{equs}
\E \Bigl(\int F_t\,dw(t)\Bigr)^2 &= \E \int F_t^2(w)\, dt + \E \int \int \cD_t F_s(w)\,\cD_t F_s(w) \,ds\,dt\\
 &\le \E \int F_t^2(w)\, dt + \E \int \int |\cD_t F_s(w)|^2\,ds\,dt\;,
\end{equs}
holds.
\end{proposition}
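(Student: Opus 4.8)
The plan is to obtain the identity by applying the integration by parts formula \eref{e:IBPsimple} twice, using the commutation relation \eref{e:DInt} in between; the inequality then follows from a one-line symmetrisation argument.

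Write $I \eqdef \int F_t\,dw(t)$. First I would apply \eref{e:IBPsimple} with the test function $G = I$ and the same family $F$, which gives
\begin{equ}
\E\, I^2 = \E\Bigl(I \int F_t\,dw(t)\Bigr) = \E\int \bigl(\cD_t I\bigr)\, F_t\,dt\;.
\end{equ}
Next I would substitute \eref{e:DInt}, namely $\cD_t I = F_t + \int \cD_t F_s\,dw(s)$, which splits the right-hand side into
\begin{equ}
\E\, I^2 = \E\int F_t^2\,dt + \E\int F_t \Bigl(\int \cD_t F_s\,dw(s)\Bigr)\,dt\;.
\end{equ}
The first term is already the one appearing in the statement. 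For the second, I would fix $t$ and apply \eref{e:IBPsimple} once more, now with the random variable $F_t$ playing the role of the test function $G$ and $s \mapsto \cD_t F_s$ playing the role of the integrand; this rewrites the inner expression as $\E\int \cD_s F_t\,\cD_t F_s\,ds$. Interchanging $\E$ with the outer $dt$-integral (Fubini) then yields
\begin{equ}
\E\, I^2 = \E\int F_t^2\,dt + \E\int\int \cD_s F_t\,\cD_t F_s\,ds\,dt\;,
\end{equ}
which is the asserted identity. For the inequality I would bound the integrand of the double integral pointwise (in $\omega$, $s$ and $t$) by $\cD_s F_t\,\cD_t F_s \le {1\over 2}\bigl(|\cD_s F_t|^2 + |\cD_t F_s|^2\bigr)$; since the double integral is invariant under the relabelling $s \leftrightarrow t$, the two resulting contributions are equal, so that $\E\int\int \cD_s F_t\,\cD_t F_s\,ds\,dt \le \E\int\int |\cD_t F_s|^2\,ds\,dt$.

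The step I expect to require the most care is the first invocation of \eref{e:IBPsimple}: to apply it with $G = I$ one needs $I$ and its Malliavin derivative to be square integrable, which is not literally contained in the hypothesis that the $F_k$ are square integrable with square integrable derivatives. I would handle this either by assuming slightly more integrability on $F$ (anything like $L^4$ with derivatives in $L^4$, which is all that is ever used in the application, suffices) or by a routine truncation-and-limit argument, while also checking that the Fubini interchange of $\E$ with $\int dt$ above is licit under the same hypotheses. As a self-contained alternative that sidesteps \eref{e:DInt}, the identity can instead be proved by the direct but more tedious route of expanding $I^2 = \bigl(\sum_k F_k\,\delta w_k - \sum_k \d_k F_k\,\delta t_k\bigr)^2$, taking expectations term by term, and repeatedly applying the elementary Gaussian integration by parts identity $\E(\delta w_k\,\Phi) = \delta t_k\,\E(\d_k\Phi)$; after the obvious cancellations, the cross terms reorganise exactly into $\E\int\int \cD_s F_t\,\cD_t F_s\,ds\,dt$.
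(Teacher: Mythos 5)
Your proof is correct, and your main argument takes a genuinely different route from the paper, although your concluding remark identifies the paper's actual proof. The paper works entirely at the discrete level: it expands
\begin{equ}
\E \Bigl(\int F_t\,dw(t)\Bigr)^2 = \sum_{k,\ell}\E\bigl(F_k F_\ell\,\delta w_k\delta w_\ell + \d_k F_k\d_\ell F_\ell\,\delta t_k\delta t_\ell - 2F_k\d_\ell F_\ell\,\delta w_k\delta t_\ell\bigr)
\end{equ}
using the definition of the Skorokhod integral, then applies the elementary Gaussian identity $\E(G\,\delta w_\ell) = \E(\d_\ell G)\,\delta t_\ell$ twice (first with $G = F_kF_\ell\delta w_k$, then again to the surviving $\delta w_k$ term), and watches the cross terms cancel to leave $\sum_{k,\ell}\E(F_kF_\ell\delta_{k\ell}\delta t_\ell + \d_kF_\ell\,\d_\ell F_k\,\delta t_k\delta t_\ell)$ --- precisely your ``self-contained alternative.'' Your main argument instead stays at the level of the already-derived operator identities \eref{e:IBPsimple} and \eref{e:DInt}, invoking the adjointness of $\cD$ and the Skorokhod integral twice with the commutation relation in between. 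This is slicker and is the argument one would naturally give in the continuous, infinite-dimensional setting, at the cost of needing a bit more integrability (your $G = I$ must itself be square integrable with square-integrable derivative) than the literal hypothesis provides; you correctly flag this and note that it is harmless here. The paper's discrete expansion sidesteps that concern by never invoking \eref{e:IBPsimple} with a Skorokhod integral as the test function, at the cost of being more of an uninstructive computation. One further note: the displayed identity in the statement has an evident typo, writing $\cD_tF_s\,\cD_tF_s$ where it should read $\cD_tF_s\,\cD_sF_t$ (otherwise the stated inequality would be an equality); both your argument and the paper's produce the correct $\cD_tF_s\,\cD_sF_t$.
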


\begin{proof}
It follows from the definition that one has the identity
\begin{equ}
\E \Bigl(\int F_t\,dw(t)\Bigr)^2 = \sum_{k,\ell}\E  \bigl(F_k F_\ell\,\delta w_k\delta w_\ell +  \d_k F_k\d_\ell F_\ell\,\delta t_k\delta t_\ell - 2 F_k\d_\ell F_\ell\,\delta w_k\delta t_\ell\bigr)\;.
\end{equ}
Applying the identity $\E G\,\delta w_\ell = \E \d_\ell G\,\delta t_\ell$ to
 the first term in the above formula (with $G = F_k F_\ell \,\delta w_k$), we thus obtain
\begin{equ}
\ldots = \sum_{k,\ell}\E  \bigl(F_k F_\ell \delta_{k,\ell} \delta t_\ell +  \d_k F_k\,\d_\ell F_\ell \,\delta t_k\delta t_\ell + (F_\ell\d_\ell F_k - F_k\d_\ell F_\ell)\,\delta w_k\delta t_\ell\bigr)\;.
\end{equ}
Applying the same identity to the last term then finally leads to 
\begin{equ}
\ldots = \sum_{k,\ell}\E  \bigl(F_k F_\ell \delta_{k,\ell} \delta t_\ell +  \d_k F_\ell \,\d_\ell F_k\,\delta t_k\delta t_\ell\bigr)\;,
\end{equ}
which is precisely the desired result.
\end{proof}

As a consequence, we have the following:
\begin{proposition}\label{prop:SkorLp}
Assume that $\sum \delta t_k = T < \infty$.
Then, for every $p > 0$ there exists $C>0$ and $k>0$ such that the bound
\begin{equ}
\E \Bigl|\int F_s\,dw(s) \Bigr|^p \le C \Bigl(1 + \sum_{0\le\ell \le k} \sup_{t_0,\ldots,t_\ell} \E \bigl|\cD_{t_1}\cdots \cD_{t_\ell} F_{t_0}\bigr|^{2p}\Bigr)\;,
\end{equ}
holds. Here, $C$ may depend on $T$ and $p$, but $k$ depends only on $p$.
\end{proposition}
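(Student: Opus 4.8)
The plan is to bound the $p$-th moment of the Skorokhod integral by an inductive argument on $p$, using Proposition~\ref{prop:Ito} to handle the base case $p=2$ and the identity \eref{e:DInt} together with Hölder's inequality to pass from $p$ to larger exponents. First I would observe that it suffices to prove the bound for $p$ an even integer, since the general case then follows by Jensen. For $p=2$ the statement is immediate from Proposition~\ref{prop:Ito}: that result gives
\begin{equ}
\E \Bigl|\int F_s\,dw(s)\Bigr|^2 \le \E \int F_t^2\,dt + \E\int\int |\cD_t F_s|^2\,ds\,dt \le T\,\sup_t \E|F_t|^2 + T^2\,\sup_{s,t}\E|\cD_t F_s|^2\;,
\end{equ}
using $\sum \delta t_k = T$, which is of the required form with $k=1$.

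For the inductive step, write $Z = \int F_s\,dw(s)$ and suppose the bound is known for all exponents up to $p$. To estimate $\E|Z|^{2p}$ one idea is to use that $Z$ is itself (up to a correction term) of the same structural type: from \eref{e:DInt} we have $\cD_t Z = F_t + \int \cD_t F_s\,dw(s)$, so the Malliavin derivative of $Z$ is again a Skorokhod integral of a process whose iterated derivatives are controlled by iterated derivatives of $F$. The cleaner route, which I would follow, is to apply Proposition~\ref{prop:Ito} not to $Z$ but to the process $t \mapsto |Z|^{2p-2} F_t$ after first reducing $\E|Z|^{2p} = \E\bigl(Z \cdot |Z|^{2p-2} Z\bigr)$ via the integration by parts formula \eref{e:IBPsimple} with $G$ roughly $|Z|^{2p-2}Z$; this turns one factor of the Skorokhod integral into a Malliavin derivative hitting $|Z|^{2p-2}Z$, producing terms of the form $|Z|^{2p-2}\int F_t \cD_t(\cdots)\,dt$ whose moments are then estimated by Hölder (splitting off $\E|Z|^{(2p-2)q}$ for a suitable conjugate exponent, which is covered by the inductive hypothesis since $(2p-2)q < 2p$ for $q$ close enough to $1$) and Proposition~\ref{prop:Ito} applied to the remaining lower-order Skorokhod-type integral. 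Each such step raises the number of Malliavin derivatives that appear by one, which is why $k$ grows with $p$ while remaining independent of $N$ and of the $F_t$; the hypothesis $\sum\delta t_k = T$ is used exactly to convert the $dt$-integrals into suprema with constants depending only on $T$ and $p$.

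The main obstacle is bookkeeping: one must check that at each stage the ``correction'' terms coming from $\cD_t$ acting on products (via the Leibniz rule) and from the $\int\cD_t F_s\,dw(s)$ piece in \eref{e:DInt} remain of the stated form, i.e.\ that no factor ever requires a moment of $F$ with more than $k$ derivatives for the final $k=k(p)$, and that the exponents stay below $2p$ so the induction closes. This is routine but slightly delicate, since naively iterating \eref{e:DInt} inside a Skorokhod integral and then applying Proposition~\ref{prop:Ito} can generate cross terms $\E\int\int \cD_t F_s\,\cD_t G_s\,ds\,dt$ that one must bound by Cauchy--Schwarz before invoking the inductive hypothesis. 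I would organise the argument so that at the $j$-th step one controls $\E|Z|^{2p}$ by a constant times $\E|Z|^{2p-2j}$ times a supremum of moments of $j$-fold Malliavin derivatives of $F$, and stop once $2p-2j \le 0$, absorbing the finitely many resulting terms into the constant $C$.
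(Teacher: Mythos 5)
There is a genuine gap in your inductive step: the exponent bookkeeping does not close. After the integration by parts you arrive at $\E\,|Z|^{2p-2}\int F_t\,\cD_t Z\,dt$ with $\cD_t Z = F_t + \int \cD_t F_s\,dw(s)$, and you propose to split this by H\"older into $(\E|Z|^{(2p-2)q})^{1/q}$ times $(\E|\int F_t\,\cD_t Z\,dt|^{q'})^{1/q'}$. For the first factor to fall under the inductive hypothesis you need $(2p-2)q<2p$, i.e.\ $q<p/(p-1)$, which forces $q'>p$. But the second factor then contains a Skorokhod integral $\int\cD_t F_s\,dw(s)$ raised to a power $q'$ (effectively $2q'>2p$) that is \emph{larger} than what you started with, and Proposition~\ref{prop:Ito} only controls second moments, not $q'$-th moments. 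So the ``remaining lower-order Skorokhod-type integral'' is in fact of higher order, and neither Proposition~\ref{prop:Ito} nor the inductive hypothesis covers it. The scheme ``bound $\E|Z|^{2p}$ by a constant times $\E|Z|^{2p-2j}$ times moments of $j$-fold derivatives'' therefore does not materialise: two-factor H\"older does not reduce the power of $Z$ for free, it trades it for a larger exponent on the other factor.

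The missing ingredient is the paper's three-way Young inequality with the specific conjugate exponents $\frac{p}{p-2}$, $2p$, $\frac{2p}{3}$. Writing $\E|Z|^p=(p-1)\,\E\int|Z|^{p-2}\,F_t\,(F_t+\int\cD_t F_s\,dw(s))\,dt$ and applying Young pointwise yields three terms: one is $\eps\,\E|Z|^p$, which is \emph{absorbed} back into the left-hand side (this is what keeps the power of $Z$ from appearing on the right at all); one is $c\,\E\int|F_t|^{2p}\,dt$, already of the desired form; and the crucial one is $c\,\E\int|F_t+\int\cD_t F_s\,dw(s)|^{2p/3}\,dt$, a Skorokhod integral of $\cD_t F_\cdot$ raised to the \emph{strictly smaller} exponent $2p/3<p$. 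The induction is then on the exponent, with the number of Malliavin derivatives rising by one per step and terminating once the exponent drops to $\le 2$, where Proposition~\ref{prop:Ito} takes over. Without the absorption and without the strictly decreasing exponent $2p/3$, the argument you outline cannot terminate.
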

\begin{proof}
Since the case $p \le 2$ follows from Proposition~\ref{prop:Ito}, we can assume 
without loss of generality that $p > 2$. Combining \eref{e:IBPsimple}
with \eref{e:DInt} and then applying H\"older's inequality, we have
\begin{equs}
\E \Bigl|\int &F_s\,dw(s) \Bigr|^p = (p-1)\,\E  \Bigl|\int F_s\,dw(s) \Bigr|^{p-2} \int F_t \Bigl(F_t + \int \cD_t F_s\,dw(s)\Bigr)\,dt \\
&\le {1\over 2} \E  \Bigl|\int F_s\,dw(s) \Bigr|^{p} + c\E \int  \Bigl|F_t + \int \cD_t F_s\,dw(s)\Bigr|^{2p \over 3}\,dt + c\E \int |F_t|^{2p}\,dt\\
&\le {1\over 2} \E  \Bigl|\int F_s\,dw(s) \Bigr|^{p} + c\E \int  \Bigl|\int \cD_t F_s\,dw(s)\Bigr|^{2p \over 3}\,dt + c\E \int (1+|F_t|)^{2p}\,dt\;.
\end{equs}
where $c$ is some constant depending on $p$ and $T$ that changes from line to line. The claim now follows by induction.
\end{proof}

\begin{remark}
The bound in Proposition~\ref{prop:SkorLp} is clearly very far from optimal. 
Actually, it is known that, for every $p \ge 1$, there exists $C$ such that
\begin{equ}
\E \Bigl|\int F_s\,dw(s) \Bigr|^{2p} \le C \E \Bigl|\int F_s^2\,ds \Bigr|^{p} + C \E \Bigl|\int |\cD_t F_s|^2\,ds\,dt \Bigr|^{p}\;,
\end{equ}
even if $T = \infty$.
However,
this extension of the Burkholder-Davies-Gundy inequality 
requires highly non-trivial harmonic analysis and, to best of the author's knowledge,
cannot be reduced to a short elementary calculation. The reader interested in knowing more
can find its proof in \cite[Ch.~1.3--1.5]{Nua95}.
\end{remark}

The proof of Theorem~\ref{theo:smooth} is now straightforward:

\begin{proof}[of Theorem~\ref{theo:smooth}]
We want to show that Lemma~\ref{lem:IBP} can be applied. 
For $\eta \in \R^n$, we then have from the definition of $\cM$ the identity
\begin{equ}[e:transDer]
\bigl(D_jG\bigr)(X(w)) = \sum_{k,m} \d_k \bigl(G(X(w))\bigr) \d_k X_m(w)\, \delta t_k\,  \cM^{-1}_{mj}(w)\;.
\end{equ}
Combining this identity with \eref{e:IBPsimple}, it follows that
\begin{equ}[e:MallDerG]
\E D_j G(X) = \E \Bigl(G(X(w)) \sum_{m} \int \cD_t X_m(w)\, \cM^{-1}_{mj}(w)\,dw(t)\Bigr)\;.
\end{equ}
Note that, by the chain rule, one has the identity
\begin{equ}
\cD_t \cM^{-1} = - \cM^{-1} \bigl(\cD_t \cM\bigr) \cM^{-1}\;,
\end{equ}
and similarly for higher order derivatives, so that the Malliavin 
derivatives of $\cM^{-1}$ can be bounded by terms involving $\cM^{-1}$
and the Malliavin derivatives of $X$.

Combining this with Proposition~\ref{prop:Ito} and \eref{e:boundintegr} immediately shows that the requested result holds
for $k=1$. Higher values of $k$ can be treated by induction by repeatedly applying \eref{e:transDer}. 
This will lead to expressions of the type \eref{e:MallDerG}, with the right hand side consisting of multiple Skorokhod
integrals of higher order polynomials in $\cM^{-1}$ and derivatives of $X$. 

By Proposition~\ref{prop:SkorLp},
the moments of each of the terms appearing in this way can be bounded by finitely many 
of the expressions appearing in the assumption
so that the required statement follows.
\end{proof}

\section{Application to Diffusion Processes}
\label{sec:Hormander}

We are now almost ready to tackle the proof of H\"ormander's theorem. Before we start,
we discuss how $\cD_s X_t$ can be computed when $X_t$ is the solution to an SDE of the
type \eref{e:SDE} and we use this discussion to formulate precise assumption for our theorem.

\subsection{Malliavin Calculus for Diffusion Processes}

By taking the limit $N \to \infty$ and $\delta t_k \to 0$ with $\sum \delta t_k = 1$, the
results in the previous section show that one can define a ``Malliavin derivative'' operator $\cD$, acting
on a suitable class of ``smooth'' 
random variables and returning a stochastic process that has all the usual properties of a derivative.
Let us see how it acts on the solution to an SDE of the type \eref{e:SDE}.

An important tool for our analysis will be the linearisation of \eref{e:SDE} with respect to its initial condition.
Denote by $\Phi_t$ the (random) solution map to \eref{e:SDE}, so that $x_t = \Phi_t(x_0)$.
It is then known that, under Assumption~\ref{ass:moments} below, $\Phi_t$ is almost surely a smooth map
for every $t$. We actually obtain a flow of smooth maps, namely a two-parameter family of maps $\Phi_{s,t}$
such that $x_t = \Phi_{s,t}(x_s)$ for every $s \le t$ and such that $\Phi_{t,u} \circ \Phi_{s,t} = \Phi_{s,u}$ and $\Phi_t = \Phi_{0,t}$.
For a given initial condition $x_0$, we then denote by $J_{s,t}$ the derivative of $\Phi_{s,t}$ evaluated at $x_s$.
Note that the chain rule immediately implies that one has the composition law $J_{s,u} = J_{t,u} J_{s,t}$, where the
product is given by simple matrix multiplication. We also use the notation $J_{s,t}^{(k)}$ for the $k$th-order derivative of $\Phi_{s,t}$.

It is straightforward to obtain an equation governing $J_{0,t}$ by
differentiating both sides of \eref{e:SDE} with respect to $x_0$. This yields the non-autonomous linear equation
\begin{equ}[e:Jac]
d J_{0,t} = DV_0(x_t)\,J_{0,t}\,dt + \sum_{i=1}^m DV_i(x_t)\,J_{0,t}\circ dW_i(t)\;,\qquad J_{0,0} = I\;,
\end{equ}
where $I$ is the $n\times n$ identity matrix.
Higher order derivatives $J_{0,t}^{(k)}$ with respect to the initial condition can be defined similarly.

\begin{remark}
For every $s>0$, the quantity $J_{s,t}$ solves the same equation as \eref{e:Jac}, except for the initial condition
which is given by $J_{s,s} = I$.
\end{remark}

On the other hand, we can use \eref{e:DInt} to, at least on a formal level,
take the Malliavin derivative of the integral form of \eref{e:SDE},
which then yields for $r \le t$ the identity
\begin{equ}
\cD_r^j X(t) = \int_r^t DV_0(X_s)\,\cD_r^j X_s\,ds + \sum_{i=1}^m\int_r^t DV_i(X_s)\,\cD_r^j X_s\circ dW_i(s) +   
V_j(X_r)\;.
\end{equ}
(Here we denote by $\cD^j$ the Malliavin derivative with respect to $W_j$; the generalisation of
the discussion of the previous section to the case of finitely many independent Wiener processes
is straightforward.)
We see that, save for the initial condition at time $t=r$ given by $V_j(X_r)$,  this equation is identical
to the integral form of \eref{e:Jac}! 

As a consequence, we have for $s < t$ the identity
\begin{equ}[e:MallDer]
\cD_s^j X_t = J_{s,t} V_j(X_s)\;.
\end{equ}
Furthermore, since $X_t$ is independent of the later increments of $W$, we have $\cD_s^j X_t = 0$ for $s \ge t$.

By the composition property $J_{0,t} = J_{s,t}J_{0,s}$, we can write $J_{s,t} = J_{0,t}J_{0,s}^{-1}$, which will be useful in the sequel.
Here, the inverse $J_{0,t}^{-1}$ of the Jacobian can be found by solving the SDE
\begin{equ}[e:JacInv]
d J_{0,t}^{-1} = -J_{0,t}^{-1}\,DV_0(x)\,dt - \sum_{i=1}^m J_{0,t}^{-1}\,DV_i(x)\circ dW_i\;.
\end{equ}
This follows from the chain rule
by noting that if we denote by $\Psi(A) = A^{-1}$ the map that takes the inverse of a
square matrix, then we have $D\Psi(A) H = -A^{-1} H A^{-1}$.

This discussion is the motivation for the following assumption, which we assume to be in
force from now on:
\begin{assumption}\label{ass:moments}
The vector fields $V_i$ are $\CC^\infty$ and all of their derivatives grow at most polynomially at infinity. 
Furthermore, they are such that the solutions to \eref{e:SDE}, \eref{e:Jac} and \eref{e:JacInv} satisfy
\begin{equ}
\E \sup_{t \le T} |x_t|^p < \infty \;,\qquad \E \sup_{t \le T} |J_{0,t}^{(k)}|^p < \infty
\;,\qquad \E \sup_{t \le T} |J_{0,t}^{-1}|^p < \infty\;,
\end{equ}
for every initial condition $x_0 \in \R^n$, every terminal time $T>0$, every $k>0$,
and every $p > 0$.
\end{assumption}

\begin{remark}
It is well-known that Assumption~\ref{ass:moments} holds if the $V_i$ are bounded with
bounded derivatives of all orders. However, this is far from being a necessary assumption.
\end{remark}

\begin{remark}
Under Assumption~\ref{ass:moments}, standard limiting procedures allow to justify 
\eref{e:MallDer}, as well as all the formal manipulations that we will perform in the sequel.
\end{remark}

With these assumptions in place, the version of H\"ormander's theorem that
we are going to prove in these notes is as follows:

\begin{theorem}\label{theo:main}
Let $x_0 \in \R^n$ and let $x_t$ be the solution to \eref{e:SDE}. If the vector fields $\{V_j\}$ satisfy the
parabolic H\"ormander condition and Assumption~\ref{ass:moments} is satisfied, then the law
of $X_t$ has a smooth density with respect to Lebesgue measure.
\end{theorem}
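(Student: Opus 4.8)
The plan is to invoke Theorem~\ref{theo:smooth}, so the whole task reduces to verifying its two hypotheses for the random variable $X = x_t$, viewed as a smooth function of the finitely many Gaussian increments $\delta w_k$ obtained by discretising the Wiener processes $W_i$ on $[0,t]$, with bounds that are uniform in the discretisation. The integrability bound \eref{e:boundintegr} on the Malliavin derivatives of $X$ is the easy half: by \eref{e:MallDer} we have $\cD_s^j X_t = J_{s,t}V_j(X_s) = J_{0,t}J_{0,s}^{-1}V_j(X_s)$, and higher Malliavin derivatives are obtained by differentiating this identity, producing polynomial expressions in $J_{0,\cdot}^{(k)}$, $J_{0,\cdot}^{-1}$, the $x_s$, and derivatives of the $V_i$; all of these have moments of all orders by Assumption~\ref{ass:moments} together with the polynomial growth of the vector fields, and H\"older's inequality closes the estimate. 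So the real content is showing that the Malliavin matrix
\begin{equ}
\cM_{ij} = \int_0^t \sum_{k=1}^m \bigl(\cD_s^k X_t\bigr)_i\bigl(\cD_s^k X_t\bigr)_j\,ds = \sum_{k=1}^m \int_0^t \bigl(J_{s,t}V_k(X_s)\bigr)_i\bigl(J_{s,t}V_k(X_s)\bigr)_j\,ds
\end{equ}
is almost surely invertible and that $\E\|\cM^{-1}\|^p < \infty$ for every $p$.

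Invertibility alone is in fact fairly quick: using $J_{s,t} = J_{0,t}J_{0,s}^{-1}$, one has $\cM = J_{0,t}\,\cC_t\,J_{0,t}^{\mathsf T}$ where $\cC_t = \sum_k\int_0^t (J_{0,s}^{-1}V_k(X_s))(J_{0,s}^{-1}V_k(X_s))^{\mathsf T}\,ds$ is the \emph{reduced} Malliavin matrix. If $\cC_t$ were degenerate, there would be a unit vector $\eta$ with $\langle\eta, J_{0,s}^{-1}V_k(X_s)\rangle = 0$ for all $s\in[0,t]$ and all $k$; differentiating this identity repeatedly in $s$ and using \eref{e:JacInv} together with the (Stratonovich) chain rule, one finds that $\langle\eta, J_{0,s}^{-1}W(X_s)\rangle = 0$ for every vector field $W$ in every $\cV_j$ — because each differentiation in $s$ precisely converts $J_{0,s}^{-1}W(X_s)$ into $J_{0,s}^{-1}[V_0,W](X_s)$ plus, via the martingale terms, $J_{0,s}^{-1}[V_i,W](X_s)$. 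Evaluating at $s=0$ (where $J_{0,0}^{-1} = I$) and using the parabolic H\"ormander condition $\bigcup_k\cV_k(x_0) = \R^n$ then forces $\eta = 0$, a contradiction. This is the heuristic content of Section~\ref{sec:heuristics} made rigorous.

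The genuinely hard step, and the main obstacle, is upgrading almost-sure invertibility to the quantitative moment bound $\E\|\cC_t^{-1}\|^p<\infty$, equivalently a bound on the probability that the smallest eigenvalue of $\cC_t$ is small: $\P(\inf_{|\eta|=1}\langle\eta,\cC_t\eta\rangle < \varepsilon)$ must decay faster than any power of $\varepsilon$. This is where the ``almost implication / almost truth'' machinery advertised in the abstract will presumably be deployed. The structure of the argument is a Norris-type lemma: if $\langle\eta,\cC_t\eta\rangle$ is small then $s\mapsto\langle\eta,J_{0,s}^{-1}V_k(X_s)\rangle$ is small in $L^2$ for each $k$; a quantitative version of ``small in $L^2$ implies small together with its semimartingale decomposition'' (a Doob–Meyer / Norris lemma quantifying that the drift and martingale parts of an It\^o process cannot conspire to cancel) then propagates smallness through one bracket at a time — small $\langle\eta, J_{0,s}^{-1}W(X_s)\rangle$ forces small $\langle\eta, J_{0,s}^{-1}[V_i,W](X_s)\rangle$ — so that after finitely many steps (enough to exhaust some $\cV_j$ spanning $\R^n$, using that the $V_i$ have at most polynomial growth so everything stays under control on the event $\sup_{s\le t}|x_s|$, $\sup_{s\le t}|J_{0,s}^{-1}|$ bounded, whose complement has superpolynomially small probability) one concludes $|\eta|$ itself is small, contradicting $|\eta|=1$ unless $\varepsilon$ was not small after all. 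Packaging this induction cleanly, with the combinatorics of the bracket hierarchy and the bookkeeping of the error exponents, is the crux; once it is done, Theorem~\ref{theo:smooth} applies verbatim and yields the smooth density.
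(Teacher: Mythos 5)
Your proposal follows the same route as the paper: reduce to Theorem~\ref{theo:smooth}, read off the Malliavin derivatives of $X_t$ from \eref{e:MallDer} and bound their moments from Assumption~\ref{ass:moments}, factor the Malliavin matrix as $\cM_{0,t}=J_{0,t}\cC_{0,t}J_{0,t}^*$ to isolate the adapted reduced matrix $\cC_{0,t}$, and then attack $\cC_{0,t}$ by propagating smallness of $Z_W(s)=\scal{\eta,J_{0,s}^{-1}W(X_s)}$ through the bracket hierarchy via a quantitative Doob--Meyer (Norris) lemma, closing the loop with H\"ormander's condition at $s=0$. The almost-sure-invertibility digression is an aside the paper omits (it is subsumed by the quantitative estimate), but it is consistent with the paper's heuristics.

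There is one gap you should close. Your Norris-type induction is carried out for a single fixed unit vector $\eta$, so what it yields is $\sup_{|\eta|=1}\P\bigl(\scal{\eta,\cC_{0,t}\eta}<\eps\bigr)\le C_p\eps^p$, with the supremum \emph{outside} the probability. But the target you (correctly) state, and what is actually needed to get $\E\|\cC_{0,t}^{-1}\|^p<\infty$, is $\P\bigl(\inf_{|\eta|=1}\scal{\eta,\cC_{0,t}\eta}<\eps\bigr)\le C_p\eps^p$, with the supremum \emph{inside}. Interchanging these is not free: the paper isolates it as Lemma~\ref{lem:invM}, which covers the unit sphere by an $\eps^2$-net of $O(\eps^{2-2n})$ points, uses $\scal{\eta,M\eta}\ge\scal{\eta_k,M\eta_k}-2\|M\|\eps^2$ on the event $\|M\|\le\eps^{-1}$, a union bound over the net, and Chebyshev together with the moment bound $\E\|M\|^p<\infty$ for the complementary event. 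Your sketch neither states this step nor uses the moment bound on $\|\cC_{0,t}\|$ that it requires, so as written the passage from ``smallness is unlikely for each $\eta$'' to ``$\cC_{0,t}^{-1}$ has finite moments'' does not close.
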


\begin{proof}
Denote by $\cA_{0,t}$ the operator $\cA_{0,t}v =  \int_0^t J_{s,t} V(X_s) v(s)\,ds$, 
where $v$ is a square integrable, not
necessarily adapted, $\R^m$-valued stochastic  process and $V$ is the 
$n \times m$ matrix-valued function obtained
by concatenating the vector fields $V_j$ for $j=1,\ldots,m$.
With this notation, it follows from \eref{e:MallDer} that the Malliavin covariance matrix $\cM_{0,t}$ of $X_t$ 
is given by
\begin{equ}
\cM_{0,t} = \cA_{0,t} \cA_{0,t}^* = \int_0^t J_{s,t} V(X_s)V^*(X_s) J_{s,t}^*\,ds\;.
\end{equ}
It follows from \eref{e:MallDer} that the assumptions of Theorem~\ref{theo:smooth} are satisfied
for the random variable $X_t$, provided that we can show that $\|\cM_{0,t}^{-1}\|$ has bounded
moments of all orders. This in turn follows by combining Lemma~\ref{lem:invM} with Theorem~\ref{theo:Hor2}
below.  
\end{proof}

\subsection{Proof of H\"ormander's Theorem}

The remainder of this section is devoted to a proof of the fact that 
H\"ormander's condition is sufficient to guarantee the invertibility of the Malliavin matrix of a diffusion process.
For purely technical reasons, it turns out to be advantageous to rewrite the Malliavin matrix as
\begin{equ}
\cM_{0,t} = J_{0,t} \cC_{0,t} J_{0,t}^*\;,\qquad  \cC_{0,t} = \int_0^t J_{0,s}^{-1} V(X_s)V^*(X_s) \bigl(J_{0,s}^{-1}\bigr)^*\,ds\;,
\end{equ}
where $\cC_{0,t}$ is the \textit{reduced Malliavin matrix} of our diffusion process. 

\begin{remark}
The reason for considering the reduced Malliavin matrix is that the process appearing under the 
integral in the definition of $\cC_{0,t}$ is adapted to the filtration generated by $W_t$. This allows us to
use some tools from stochastic calculus that would not be available otherwise.
\end{remark}

Since we assumed that $J_{0,t}$
has inverse moments of all orders, the invertibility of $\cM_{0,t}$ is equivalent to that of $\cC_{0,t}$. Note first that 
since $\cC_{0,t}$ is a positive definite symmetric matrix, the norm of its inverse is given by
\begin{equ}
\|\cC_{0,t}^{-1}\| = \Bigl(\inf_{|\eta| = 1} \scal{\eta, \cC_{0,t}\eta}\Bigr)^{-1}\;.
\end{equ}

A very useful observation is then the following:

\begin{lemma}\label{lem:invM}
Let $M$ be a symmetric positive semidefinite $n\times n$ matrix-valued random variable such that $\E \|M\|^p < \infty$ for every $p\ge 1$ and such that, 
for every $p \ge 1$ there
exists $C_p$ such that 
\begin{equ}[e:assM]
\sup_{|\eta| = 1} \P \bigl(\scal{\eta,M\eta} <\eps\bigr) \le C_p \eps^p\;,
\end{equ}
holds for every $\eps \le 1$. Then, $\E \|M^{-1}\|^p < \infty$ for every $p \ge 1$.
\end{lemma}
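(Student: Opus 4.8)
The plan is to deduce the conclusion from a \emph{uniform} version of the hypothesis \eref{e:assM}, obtained by a covering argument on the unit sphere. Since $M$ is symmetric and positive semidefinite, one has $\norm{M^{-1}} = \bigl(\inf_{|\eta|=1}\scal{\eta,M\eta}\bigr)^{-1}$ (both sides being $+\infty$ when $M$ is singular), and by the layer-cake formula $\E\norm{M^{-1}}^p = p\int_0^\infty s^{p-1}\P(\norm{M^{-1}}>s)\,ds$. The part of this integral over $s\le 1$ contributes at most $1$, so it suffices to prove that for every $q\ge 1$ there is $\bar C_q$ with $\P\bigl(\inf_{|\eta|=1}\scal{\eta,M\eta}<\eps\bigr)\le\bar C_q\,\eps^q$ for all $\eps\le 1$; letting $\eps\to 0$ this also shows $M$ is almost surely invertible, so the displayed identity is legitimate.

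To pass from the fixed-direction estimate \eref{e:assM} to this uniform one, I would fix $\eps\le 1$ and a truncation level $R\ge 1$ to be optimised later, cover the unit sphere $S^{n-1}\subset\R^n$ by balls of radius $\delta = \eps/(2R)$ centred at points $\eta_1,\dots,\eta_N$ with $N\le (C_n R/\eps)^n$ for a dimensional constant $C_n$, and use the elementary identity $\scal{\eta,M\eta}-\scal{\eta_i,M\eta_i} = \scal{\eta-\eta_i,M(\eta+\eta_i)}$ (valid because $M$ is symmetric), which gives $|\scal{\eta,M\eta}-\scal{\eta_i,M\eta_i}| \le 2\norm M\,|\eta-\eta_i|$. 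Hence on the event $\{\norm M\le R\}$, whenever $|\eta-\eta_i|\le\delta$ one has $|\scal{\eta,M\eta}-\scal{\eta_i,M\eta_i}|\le 2R\delta = \eps$, so that $\inf_{|\eta|=1}\scal{\eta,M\eta}<\eps$ forces $\scal{\eta_i,M\eta_i}<2\eps$ for some $i$. A union bound together with \eref{e:assM} then yields, for every $p\ge 1$,
\begin{equ}
\P\Bigl(\inf_{|\eta|=1}\scal{\eta,M\eta}<\eps\Bigr) \le \P(\norm M>R) + N\,\sup_i\P\bigl(\scal{\eta_i,M\eta_i}<2\eps\bigr) \le \P(\norm M>R) + (C_n R/\eps)^n\,C_p\,(2\eps)^p\;.
\end{equ}

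It then remains to balance the two terms. Taking $R = 1/\eps$, the second term is bounded by a constant depending only on $p$ and $n$ times $\eps^{p-2n}$, which is at most $\eps^q$ as soon as $p\ge q+2n$; and Markov's inequality together with the assumed finiteness of all moments of $\norm M$ gives $\P(\norm M>1/\eps)\le \E\norm M^q\,\eps^q$. Combining these establishes the uniform bound, and feeding it back into the layer-cake formula (with exponent $q = p+1$, say) gives $\E\norm{M^{-1}}^p<\infty$ for every $p\ge 1$.

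The only genuinely delicate point is exactly this transfer from a single direction to the whole sphere: controlling the perturbation of the quadratic form forces the net mesh $\delta$ to be comparable to $\eps$ (times the truncation scale), so the cardinality $N$ blows up polynomially in $1/\eps$. This is harmless precisely because \eref{e:assM} is available with an \emph{arbitrarily high} power $p$, so one can afford to discard a fixed number of powers of $\eps$; the moment bound on $\norm M$ is what makes the truncation to $\{\norm M\le R\}$ cost essentially nothing. Everything else is routine.
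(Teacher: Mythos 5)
Your proof is correct and follows essentially the same route as the paper's: a covering argument on the unit sphere with mesh comparable to $\eps^2$ (you write it as $\eps/(2R)$ with $R=1/\eps$, which is the same thing), the perturbation bound $|\scal{\eta,M\eta}-\scal{\eta_i,M\eta_i}|\le 2\|M\|\,|\eta-\eta_i|$ for the quadratic form, truncation of $\|M\|$ at level $1/\eps$ controlled by Chebyshev and the moment hypothesis, and a union bound powered by the arbitrarily high exponent in \eref{e:assM}. The only cosmetic difference is that you spell out the layer-cake step converting the small-ball bound into moment bounds on $\|M^{-1}\|$, which the paper leaves implicit.
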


\begin{proof}
The non-trivial part of the result is that the supremum over $\eta$ is taken outside of the probability in \eref{e:assM}.
For $\eps > 0$, let $\{\eta_k\}_{k \le N}$ be a sequence of vectors with $|\eta_k| = 1$ such that for every $\eta$ with $|\eta| \le 1$, 
there exists $k$ such that $|\eta_k - \eta| \le \eps^2$. It is clear that one can find such a set with $N \le C\eps^{2-2n}$
for some $C>0$ independent of $\eps$.
We then have the bound
\begin{equs}
\scal{\eta,M\eta} &= \scal{\eta_k,M\eta_k} + \scal{\eta-\eta_k,M\eta} + \scal{\eta-\eta_k,M\eta_k} \\
& \ge \scal{\eta_k,M\eta_k} - 2\|M\| \eps^2\;,
\end{equs}
so that
\begin{equs}
\P\Bigl(\inf_{|\eta| = 1} \scal{\eta,M\eta} \le \eps\Bigr)&\le 
\P\Bigl(\inf_{k \le N} \scal{\eta_k,M\eta_k} \le 4\eps\Bigr) + \P\Bigl(\|M\| \ge {1\over \eps}\Bigr) \\
&\le C \eps^{2-2n} \sup_{|\eta|=1} \P\Bigl(\scal{\eta,M\eta} \le 4\eps\Bigr) + \P\Bigl(\|M\| \ge {1\over \eps}\Bigr)\;.
\end{equs}
It now suffices to use \eref{e:assM} for $p$ large enough to bound the first term and Chebychev's inequality
combined with the moment bound on $\|M\|$ to bound the second term.
\end{proof}

As a consequence of this, Theorem~\ref{theo:main} is a corollary of:

\begin{theorem}\label{theo:Hor2}
Consider \eref{e:SDE} and assume that Assumption~\ref{ass:moments} holds. If the corresponding vector fields satisfy the parabolic 
H\"ormander condition then, for every initial condition $x \in \R^n$, we have the bound
\begin{equ}
\sup_{|\eta| = 1} \P \bigl(\scal{\eta,\cC_{0,1} \eta} <\eps\bigr) \le C_p \eps^p\;,
\end{equ}
for suitable constants $C_p$ and all $p \ge 1$.
\end{theorem}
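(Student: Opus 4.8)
The plan is to prove the bound by a contradiction/iteration scheme that quantitatively extracts H\"ormander's condition from the smallness of $\scal{\eta, \cC_{0,1}\eta}$. The starting observation is that, writing $\cC_{0,1} = \int_0^1 J_{0,s}^{-1}V(X_s)V^*(X_s)(J_{0,s}^{-1})^*\,ds$, smallness of $\scal{\eta,\cC_{0,1}\eta}$ forces $|V_j(X_s)^* (J_{0,s}^{-1})^* \eta|$ to be small (in an $L^2$-in-time sense) simultaneously for all $j = 1,\dots,m$. Introduce, for a smooth vector field $Z$ on $\R^n$, the process $Z_s \eqdef \scal{\eta, J_{0,s}^{-1} Z(X_s)}$ (or equivalently $\scal{(J_{0,s}^{-1})^*\eta, Z(X_s)}$), so that small $\scal{\eta,\cC_{0,1}\eta}$ means all the $V_{j,s}$ are small. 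The key algebraic input is an It\^o-type formula: using \eref{e:SDE} and \eref{e:JacInv}, one computes that the Stratonovich differential of $Z_s$ is $dZ_s = [V_0,Z]_s\,ds + \sum_{i=1}^m [V_i,Z]_s\circ dW_i(s)$, i.e. the bracket structure of H\"ormander's condition is reproduced exactly along the process $J_{0,s}^{-1}(\cdot)(X_s)$. Thus if $Z_s$ is uniformly small on a time interval and the $V_{i,s}$ are all small, a suitable quantitative martingale estimate should force $[V_i,Z]_s$ and $[V_0,Z]_s$ to be small as well.

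The engine making this rigorous is a Norris-type lemma: if $Y_t = Y_0 + \int_0^t a_s\,ds + \int_0^t b_s\,dW(s)$ (in It\^o form, after converting from Stratonovich) and $\|Y\|_\infty \le \eps$ with suitable moment bounds on $a,b$ and their Malliavin derivatives, then $\|a\|_\infty$ and $\|b\|_\infty$ are bounded by a small power of $\eps$ with high probability. First I would state and prove such a lemma (this is essentially a quantitative version of the statement that a semimartingale cannot be small without its drift and diffusion coefficients being small), using the Skorokhod-integral machinery of Section~\ref{sec:Malliavin} --- in particular Proposition~\ref{prop:SkorLp} --- to control the stochastic-integral term, together with interpolation between the sup norm and the quadratic variation. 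Then I would run the iteration: let $\cK_j = \cV_j$ be the collections of vector fields from Definition~\ref{def:HorBrack}; set up an induction on $k$ showing that on the event where $\scal{\eta,\cC_{0,1}\eta} < \eps$, with probability at least $1 - C_p\eps^p$, the quantity $\int_0^1 |Z_s|^2\,ds$ is bounded by $\eps^{\alpha_k}$ for every $Z \in \cV_k$, where $\alpha_k > 0$ decreases by a fixed factor at each step. The base case $k=0$ is immediate from the definition of $\cC_{0,1}$; the inductive step applies the Norris-type lemma to $Z_s$ for $Z \in \cV_k$, using $dZ_s = [V_0,Z]_s\,ds + \sum_i [V_i,Z]_s\circ dW_i(s)$, converting to It\^o form (which only adds further bracket terms, all of which lie in $\cV_{k+1}$ or are controlled inductively), to conclude that all the bracket terms are small, hence all $Z' \in \cV_{k+1}$ give small $\int_0^1 |Z'_s|^2\,ds$.

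Finally, H\"ormander's condition enters through a compactness argument: since $\bigcup_{k\ge1}\cV_k(x) = \R^n$ for every $x$, and by continuity this holds uniformly for $x$ in a neighbourhood of the (random but tight) trajectory, there is a (random) $k_*$ and vector fields $Z^{(1)},\dots,Z^{(n)} \in \cV_{k_*}$ spanning $\R^n$ at $X_0 = x_0$; evaluating the smallness conclusion at $s=0$ (or on a short initial interval, using continuity of $s\mapsto Z_s$ and a standard argument that an $L^2$-small continuous process starting from $Z_0$ with controlled modulus of continuity must have $|Z_0|$ small) gives $|\scal{\eta, Z^{(i)}(x_0)}|$ small for a spanning family, whence $|\eta|$ small --- contradicting $|\eta| = 1$ once $\eps$ is small enough, and more precisely yielding the desired polynomial bound. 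The main obstacle, and the technical heart of the proof, is the Norris-type lemma together with the careful bookkeeping of the exponents $\alpha_k$ through the finitely many iteration steps and the uniform (in $\eta$) control --- everything else is either the clean bracket computation for $dZ_s$ or a routine compactness/continuity argument. One should also be slightly careful that $k_*$ is a.s. finite but not deterministic; this is handled by noting that the event ``$\cV_k(x_0)$ spans $\R^n$'' is deterministic and holds for some finite $k$, so one may fix $k_* = k$ from the outset.
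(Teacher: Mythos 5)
Your proposal follows essentially the same route as the paper: you introduce the same process $Z_F(t)=\scal{\eta,J_{0,t}^{-1}F(x_t)}$, observe that it satisfies the Stratonovich SDE $dZ_F=Z_{[F,V_0]}\,dt+\sum_i Z_{[F,V_i]}\circ dW_i$, control $\sum_k\int_0^1|Z_{V_k}|^2\,dt$ by $\scal{\eta,\cC_{0,1}\eta}$, iterate a Norris-type lemma through the collections $\cV_k$, and finish by evaluating at $t=0$ where H\"ormander's condition gives a spanning family at the deterministic point $x_0$. This is precisely the paper's argument, including the remark (which you correctly flag and resolve) that the terminating $k_*$ is deterministic.

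The one place where you diverge, and where I'd push back, is the suggested proof of the Norris lemma. You propose to bound the martingale part via the Skorokhod-integral moment estimate (Proposition~\ref{prop:SkorLp}). In the Norris lemma the integrands $A$, $B$ are \emph{adapted}, so the stochastic integral is a genuine It\^o integral, and the Skorokhod machinery is neither needed nor particularly well adapted: crude polynomial moment bounds on $\int ZB\,dW$ do not directly yield the required ``almost implication'' $\{\|Z\|_\infty<\eps\}\Rightarrow_\eps\{\|B\|_\infty<\eps^r\}$, because the smallness of $Z$ is itself a random event and you would have to condition carefully rather than just apply Chebyshev on unconditional moments. The paper instead uses the exponential martingale inequality, which gives a \emph{deterministic} tail bound on the joint event $\{\sup|M|\ge x\}\cap\{\scal{M}\le y\}$ and hence an honest almost-implication with no conditioning issues; combined with the elementary interpolation Lemma~\ref{lem:dtf} this closes the argument cleanly. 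Your interpolation step (sup norm versus quadratic variation / H\"older norm) is the right idea and matches Lemma~\ref{lem:dtf} in spirit. So the overall structure is sound, but I would replace the appeal to Proposition~\ref{prop:SkorLp} in the Norris lemma by the exponential martingale inequality, since otherwise the ``with high probability'' quantifier in your inductive statement does not propagate correctly through the stochastic-integral step.
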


\begin{remark}
The choice $t=1$ as the final time is of course completely arbitrary. Here and in the sequel, we will 
always consider functions on the time interval $[0,1]$.
\end{remark}

Before we turn to the proof of this result, we introduce a very
useful notation which, to the best of the author's knowledge, was first used in \cite{ErgodicBig}. 
Given a family $A = \{A_\eps\}_{\eps \in (0,1]}$ of events depending
on some parameter $\eps > 0$, we say that $A$ is ``almost true'' if, for every $p>0$ there exists a constant $C_p$ such that 
$\P(A_\eps) \ge 1-C_p \eps^p$ for all $\eps \in (0,1]$. Similarly for ``almost false''.
Given two such families of events $A$ and $B$, we say that ``$A$ almost implies $B$'' and we write $A \Rightarrow_\eps B$
if $A \setminus B$ is almost false. It is straightforward to check that these notions behave as expected (almost implication is
transitive, finite unions of almost false events are almost false, etc). Note also that these notions are unchanged
under any reparametrisation of the form $\eps \mapsto \eps^\alpha$ for $\alpha > 0$.
Given two families $X$ and $Y$ of real-valued random variables, we will similarly write $X \le_\eps Y$ as a shorthand
for the fact that $\{X_\eps \le Y_\eps\}$ is ``almost true''.

Before we proceed, we state the following useful result, where $\|\cdot\|_\infty$ denotes the $L^\infty$ norm
and $\|\cdot\|_\alpha$ denotes the best possible $\alpha$-H\"older constant.

\begin{lemma}\label{lem:dtf}
  Let $f \colon [0,1] \to \R$ be continuously differentiable and let
  $\alpha \in (0,1]$. Then, the bound
  \begin{equ}
   \|\d_t f\|_{\infty} = \| f\|_{1} \le 4\|f\|_{\infty} \max \Bigl\{ 1 , \|f\|_{\infty}^{-{1 \over 1+\alpha}} \|
   \d_t  f\|_{\alpha}^{1\over 1+\alpha} \Bigr\}
  \end{equ}
  holds, where $\|f\|_\alpha$ denotes the best $\alpha$-H\"older constant for $f$.
\end{lemma}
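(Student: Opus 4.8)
The plan is to prove the interpolation-type bound
\begin{equ}
\|f\|_1 \le 4\|f\|_\infty \max\Bigl\{1,\|f\|_\infty^{-{1\over 1+\alpha}}\|\d_t f\|_\alpha^{1\over 1+\alpha}\Bigr\}
\end{equ}
by a direct localised argument at a point where $|\d_t f|$ is (nearly) maximal. First I would pick $t_\star \in [0,1]$ with $|\d_t f(t_\star)| = \|\d_t f\|_\infty =: A$ (using continuity of $\d_t f$ on the compact interval; if $A=0$ the statement is trivial). The idea is that if the derivative is large at $t_\star$, then by H\"older continuity of $\d_t f$ it stays comparable to $A$ on a whole interval around $t_\star$, and integrating $\d_t f$ over that interval forces $f$ itself to be large — contradicting the bound on $\|f\|_\infty$ unless the interval is short, which bounds $A$.

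Concretely, set $H := \|\d_t f\|_\alpha$, so that $|\d_t f(t)| \ge A - H|t-t_\star|^\alpha$ for all $t$. On the interval $I$ around $t_\star$ where $|t-t_\star| \le \ell$ with $\ell := (A/(2H))^{1/\alpha}$, the derivative has constant sign and satisfies $|\d_t f| \ge A/2$. Intersecting $I$ with $[0,1]$ gives an interval of length at least $\min\{\ell,1/2\}$ on one side of $t_\star$ (at least one of the two half-intervals $[t_\star-\ell,t_\star]$, $[t_\star,t_\star+\ell]$ has length at least $\min\{\ell,1/2\}$ inside $[0,1]$, since $t_\star$ is at distance $\ge 1/2$ from at least one endpoint — actually one should just note $[0,1]$ has length $1$ so one of the two sides retains length $\ge \min\{\ell, 1/2\}$). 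Integrating $\d_t f$ across that subinterval yields $2\|f\|_\infty \ge {A\over 2}\min\{\ell, 1/2\}$, i.e.
\begin{equ}
A \le 8\|f\|_\infty \max\Bigl\{\ell^{-1}, 2\Bigr\} = 8\|f\|_\infty \max\Bigl\{(2H/A)^{1/\alpha}, 2\Bigr\}\;.
\end{equ}

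The remaining step is to disentangle this implicit inequality for $A$. In the case where the max is achieved by the constant, we get $A \le 16\|f\|_\infty \le \text{RHS}$ directly. In the other case $A \le 8\|f\|_\infty (2H)^{1/\alpha} A^{-1/\alpha}$, which rearranges to $A^{1 + 1/\alpha} \le 8\|f\|_\infty (2H)^{1/\alpha}$, hence $A \le (8\|f\|_\infty)^{\alpha/(1+\alpha)} (2H)^{1/(1+\alpha)}$; absorbing the numerical constants (all powers with exponent in $(0,1)$, so $2^{1/\alpha}$ does not appear — one checks $8^{\alpha/(1+\alpha)} 2^{1/(1+\alpha)} \le 4\cdot\text{something}$, and since $\alpha \le 1$ these constants stay bounded by $4$... this needs a quick check) gives $A \le 4\|f\|_\infty^{\alpha/(1+\alpha)} H^{1/(1+\alpha)} = 4\|f\|_\infty \cdot \|f\|_\infty^{-1/(1+\alpha)} H^{1/(1+\alpha)}$, as required. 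I expect the main obstacle to be bookkeeping of the numerical constants so that the final factor is exactly $4$ — this is purely a matter of tracking the $8$'s and $2$'s through the fractional powers and possibly choosing the localisation length $\ell$ slightly more cleverly (e.g. replacing $A/2$ by $A/c$ for a better $c$), rather than any real difficulty; the geometric heart of the argument is the one-line integration estimate above.
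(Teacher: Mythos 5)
Your approach is exactly the paper's: pick $t_\star$ maximising $|\d_t f|$, use H\"older continuity of $\d_t f$ to keep $|\d_t f| \ge A/2$ on an interval of half-width $\ell = (A/(2H))^{1/\alpha}$ around $t_\star$, integrate to trade the lower bound on the derivative for a lower bound on $\|f\|_\infty$, and then solve the resulting implicit inequality for $A$. The geometric heart is identical, and the paper's own write-up is no more detailed than yours.

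The one place your argument does not actually close is the boundary case, and it is not just cosmetic. You take the usable subinterval to have length $\min\{\ell,1/2\}$ and deduce $A \le 8\|f\|_\infty\max\{\ell^{-1},2\}$; in the branch where the maximum is the constant you then assert that $A \le 16\|f\|_\infty$ is ``directly'' dominated by the right-hand side, but the right-hand side is $4\|f\|_\infty\max\{1,\ldots\}$, which can be as small as $4\|f\|_\infty$, so this step genuinely fails when $\|\d_t f\|_\alpha$ is not much larger than $\|f\|_\infty$. (Separately, your prefactor should already be $4$, not $8$: $2\|f\|_\infty \ge (A/2)L$ gives $A \le 4\|f\|_\infty/L$.) The repair is a sharper length estimate: the set $[t_\star-\ell,t_\star+\ell]\cap[0,1]$ has length at least $\min\{\ell,1\}$, because if $\ell\ge 1$ it is all of $[0,1]$, while if $\ell<1$ and neither one-sided interval $[t_\star-\ell,t_\star]$, $[t_\star,t_\star+\ell]$ lies inside $[0,1]$ then $t_\star<\ell$ and $1-t_\star<\ell$, forcing the intersection to be $[0,1]$ itself, of length $1>\ell$. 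With $L\ge\min\{\ell,1\}$ one gets $A\le 4\|f\|_\infty\max\{1,\ell^{-1}\}$; the first branch gives $A\le 4\|f\|_\infty$ outright, and in the second branch $A^{1+1/\alpha}\le 4\|f\|_\infty(2H)^{1/\alpha}$ yields $A\le 2^{(2\alpha+1)/(1+\alpha)}\|f\|_\infty^{\alpha/(1+\alpha)}H^{1/(1+\alpha)}\le 4\|f\|_\infty^{\alpha/(1+\alpha)}H^{1/(1+\alpha)}$ for every $\alpha\in(0,1]$, which is exactly the stated bound.
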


\begin{proof}
  Denote by $x_0$ a point such that $|\d_t f(x_0)| = \|\d_t
  f\|_{\infty}$. It follows from the definition of the
  $\alpha$-H\"older constant $\|\d_t f\|_{\CC^\alpha}$ that $|\d_t
  f(x)| \ge {1\over 2} \|\d_t f\|_{\infty}$ for every $x$ such that
  $|x - x_0| \le \bigl(\|\d_t f\|_{\infty} / 2 \|\d_t
  f\|_{\CC^\alpha}\bigr)^{1/\alpha}$.  The claim then follows from the
  fact that if $f$ is continuously differentiable and $|\d_t f(x)| \ge A$ over an interval $I$, then there
  exists a point $x_1$ in the interval such that $|f(x_1)| \ge
  A|I|/2$.
\end{proof}

With these notations at hand, we have the following statement, which is essentially a quantitative
version of the Doob-Meyer decomposition theorem. Originally, it appeared in \cite{Nor86SMC},
although some form of it was already present in earlier works. The statement and proof given here are slightly different
from those in \cite{Nor86SMC}, but are very close to them in spirit.

\begin{lemma}
Let $W$ be an $m$-dimensional Wiener process and let $A$ and $B$ be $\R$ and $\R^m$-valued 
adapted processes such that, for $\alpha = {1\over 3}$, one has $\E \bigl(\|A\|_\alpha + \|B\|_\alpha\bigr)^p < \infty$ for every $p$.
Let $Z$ be the process defined by
\begin{equ}[e:defZ]
Z_t = Z_0 + \int_0^t A_s\,ds + \int_0^t B_s\,dW(s)\;.
\end{equ}
Then, there exists a universal constant $r \in (0,1)$ such that one has
\begin{equ}
\bigl\{\|Z\|_\infty < \eps\bigr\} \quad\Rightarrow_\eps\quad \bigl\{\|A\|_\infty < \eps^r\bigr\}\;\&\;\bigl\{\|B\|_\infty < \eps^r\bigr\}\;.
\end{equ}
\end{lemma}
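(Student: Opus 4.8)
The plan is to exploit the fact that the bracket $\langle Z\rangle_t = \int_0^t |B_s|^2\,ds$ of the martingale part is observable from the oscillations of $Z$, while the drift part is comparatively smooth. Concretely, suppose $\|Z\|_\infty < \eps$ on an event we are conditioning on. I would first control the martingale part: writing $M_t = \int_0^t B_s\,dW(s) = Z_t - Z_0 - \int_0^t A_s\,ds$, the quadratic variation of $M$ over $[0,1]$ can be estimated both from above (it is, up to the drift contribution, of the same order as $\sup_t |M_t|^2$ modulo the usual BDG-type martingale oscillation estimates) and from below. For the lower bound, I would partition $[0,1]$ into intervals of length $\delta$ and note that on each interval the increment of $M$ is, with overwhelming probability, at least of order $\sqrt{\delta}$ times the local value of $|B|$, unless $|B|$ is already small there; combined with the H\"older continuity of $B$ and $A$ this forces $\|B\|_\infty$ to be small whenever all the increments of $Z$ are small. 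The key technical device is the exponential martingale inequality (or the reflection principle / BDG inequality with all moments), which lets one say that $\{\sup_{t\le 1}|M_t| \ge \lambda\}$ and $\{\langle M\rangle_1 \le \lambda^2 \eps^{\text{something}}\}$ together form an almost-false event in the sense of the $\Rightarrow_\eps$ notation.

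The cleanest route is probably the following two-step argument. Step one: show $\{\|Z\|_\infty < \eps\} \Rightarrow_\eps \{\|M\|_\infty < \eps^{r_1}\}$ and $\{\|Z\|_\infty < \eps\} \Rightarrow_\eps \{\|\int_0^\cdot A_s\,ds\|_\infty < \eps^{r_1}\}$ for some $r_1 \in (0,1)$. For the martingale piece this follows because $\int_0^\cdot A_s\,ds$ has a time derivative bounded by $\|A\|_\infty$, which has all moments, so a crude pathwise estimate separates the two contributions after losing a power of $\eps$; alternatively one uses that the drift, having a bounded-moment derivative, cannot oscillate fast, whereas fast oscillation is exactly what a nonzero $B$ produces. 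Step two, the heart of the matter: from $\|M\|_\infty < \eps^{r_1}$ deduce $\|B\|_\infty < \eps^{r_2}$. Here one discretises: with $N = \lfloor \eps^{-\kappa}\rfloor$ subintervals $I_j$ of length $1/N$, on the event where $B$ is not small one has $\E[|M_{t_{j+1}} - M_{t_j}|^2 \mid \CF_{t_j}] = \E[\int_{I_j} |B_s|^2\,ds\mid \CF_{t_j}] \gtrsim |B_{t_j}|^2/N$ up to a H\"older correction of size $N^{-\alpha}\|B\|_\alpha$; a conditional Paley–Zygmund / small-ball estimate for the Gaussian-like increment $M_{t_{j+1}}-M_{t_j}$ then shows that $\{|M_{t_{j+1}}-M_{t_j}| \le \eps^{r_1}\}$ is unlikely unless $|B_{t_j}|$ is itself of order $\eps^{\text{power}}$; taking a union bound over $j$ and using $\alpha$-H\"older continuity to pass from the grid points $t_j$ back to all of $[0,1]$ closes the argument. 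Finally, once $B$ is controlled, $\int_0^\cdot A_s\,ds$ is controlled too (step one), and one more application of Lemma~\ref{lem:dtf} — exactly the reason that lemma was stated — converts the smallness of $\|\int_0^\cdot A_s\,ds\|_\infty$ together with the H\"older bound on $A$ into smallness of $\|A\|_\infty$.

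The main obstacle I expect is the lower bound on the conditional increments in step two, i.e.\ the small-ball estimate that says a martingale increment with conditional variance $\sigma^2$ is, with high probability, not much smaller than $\sigma$. The increments $M_{t_{j+1}}-M_{t_j}$ are not Gaussian, only conditionally a stochastic integral, so one needs either a time-change argument (Dubins–Schwarz) reducing to Brownian motion evaluated at the random time $\langle M\rangle$, or a direct second-moment bound $\E[|M_{t_{j+1}}-M_{t_j}|^4\mid\CF_{t_j}] \lesssim (\E[|M_{t_{j+1}}-M_{t_j}|^2\mid\CF_{t_j}])^2$ (which holds with a universal constant by BDG for each interval) feeding a conditional Paley–Zygmund inequality. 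Managing the interplay between the H\"older exponent $\alpha = 1/3$, the number of subintervals, and the various powers of $\eps$ so that a single universal $r$ comes out is bookkeeping but needs care; the exponent $1/3$ is presumably chosen precisely so that the error terms balance. Everything else — the moment bounds, the transfer from grid to continuum via H\"older continuity, the final invocation of Lemma~\ref{lem:dtf} — is routine given the ``almost true/almost false'' calculus, which absorbs all the polynomial-in-$\eps$ losses without comment.
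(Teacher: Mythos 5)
Your strategy is genuinely different from the paper's, which runs the following much shorter circuit: apply It\^o's formula to $Z^2$, so that
\begin{equ}
Z_t^2 = Z_0^2 + 2\int_0^t Z_s A_s\,ds + 2\int_0^t Z_s B_s\,dW(s) + \int_0^t B_s^2\,ds\;,
\end{equ}
observe that on $\{\|Z\|_\infty<\eps\}$ the two middle terms are almost-small (moment bound on $\|A\|_\infty$, $\|B\|_\infty$ plus one application of the exponential martingale inequality to $\int Z_sB_s\,dW$), so that $\int_0^1 B_s^2\,ds$ is almost-small, pass to $\int_0^1|B_s|\,ds$ by Cauchy--Schwarz, and invoke Lemma~\ref{lem:dtf} to get $\|B\|_\infty$ small. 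Only then does the paper control the martingale part (via the exponential martingale inequality, now that $B$ is known small) and feed the drift to Lemma~\ref{lem:dtf} to get $\|A\|_\infty$. The It\^o-on-$Z^2$ trick produces $\int B^2$ as a \emph{visible} term in one line and thereby bypasses the entire discretisation, conditional small-ball and union-bound machinery you propose. Your route, by contrast, attacks the increments of $M$ directly and in principle works, but is substantially longer: the conditional Paley--Zygmund step (bounding $\E[N^4\mid\CF_{t_j}]$ by $(\E[N^2\mid\CF_{t_j}])^2$ using BDG together with the H\"older control of $B$ over $I_j$) must be carried out on a good event for $\|B\|_\alpha$ and glued to a union bound over $\sim\eps^{-\kappa}$ intervals; it is doable in the almost-true calculus but is precisely the bookkeeping that the $Z^2$ identity was designed to avoid.

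There is, however, a real error in your Step~one as stated. You cannot conclude $\{\|Z\|_\infty<\eps\}\Rightarrow_\eps\{\|M\|_\infty<\eps^{r_1}\}$ from the moment bounds on $A$ alone: since $M_t = Z_t - Z_0 - \int_0^t A_s\,ds$, the best a priori estimate is $\|M\|_\infty \le 2\eps + \|A\|_\infty$, and $\|A\|_\infty$ is only almost-bounded by $\eps^{-\delta}$, not by a positive power of $\eps$. A large constant drift can be cancelled by a large martingale while $Z$ stays small; that is exactly the Doob--Meyer degeneracy the lemma must rule out, and it cannot be resolved before one has said something about $B$. What you \emph{can} get cheaply is increment control: over intervals of length $h\approx\eps^{1+\delta}$ one has $|M_{t+h}-M_t|\le 2\eps+h\|A\|_\infty\le_\eps 3\eps$. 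Fortunately your Step~two only uses increments $M_{t_{j+1}}-M_{t_j}$, so the overall program survives this correction, but the chain of implications should be rewritten to go from $Z$ small, to increments of $M$ small, to $\|B\|_\infty$ small (small-ball/Paley--Zygmund), then to $\|M\|_\infty$ small (exponential martingale inequality), then to $\|\int_0^\cdot A\|_\infty$ small, then to $\|A\|_\infty$ small via Lemma~\ref{lem:dtf}. Written in that order it matches the logical structure of the paper's proof even though the method for controlling $B$ is different.
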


\begin{proof}
Recall the exponential martingale inequality \cite[p.~153]{Yor}, stating that if $M$ is any continuous martingale with quadratic
variation process $\scal{M}(t)$, then
\begin{equ}
\P \Bigl(\sup_{t \le T} |M(t)| \ge x\quad\&\quad \scal{M}(T) \le y\Bigr) \le 2\exp \bigl(-x^2/2y\bigr)\;,
\end{equ}
for every positive $T$, $x$, $y$.
With our notations, this implies that for any $q < 1$ and any adapted process $F$, 
one has the almost implication
\begin{equ}[e:expMart]
\bigl\{\|F\|_\infty < \eps\bigr\} \quad\Rightarrow_\eps\quad \Bigl\{\Bigl\|\int_0^\cdot F_t\,dW(t)\Bigr\|_\infty < \eps^q\Bigr\}\;. 
\end{equ}
With this bound in mind, we apply It\^o's formula to $Z^2$, so that
\begin{equ}[e:Z2]
Z_t^2 = Z_0^2 + 2\int_0^t Z_s\,A_s\,ds + 2\int_0^t Z_s\,B_s\,dW(s) + \int_0^t B_s^2\,ds\;.
\end{equ}
Since $\|A\|_\infty \le_\eps \eps^{-1/4}$ (or any other negative exponent for that matter) 
by assumption and similarly for $B$, it follows from this and \eref{e:expMart} that 
\begin{equ}
\bigl\{\|Z\|_\infty < \eps\bigr\} \quad\Rightarrow_\eps\quad \Bigl\{\Bigl|\int_0^1 A_s\,Z_s\,ds\Bigr| \le \eps^{3\over 4}\Bigr\}\
\;\&\; \Bigl\{\Bigl|\int_0^1 B_s\,Z_s\,dW(s)\Bigr| \le \eps^{2\over 3}\Bigr\}\;.
\end{equ}
Inserting these bounds back into \eref{e:Z2} and applying Jensen's inequality then yields
\begin{equ}
\bigl\{\|Z\|_\infty < \eps\bigr\} \quad\Rightarrow_\eps\quad \Bigl\{\int_0^1 B_s^2\,ds \le \eps^{1\over 2}\Bigr\}\
\quad\Rightarrow\quad \Bigl\{\int_0^1 |B_s|\,ds \le \eps^{1\over 4}\Bigr\}\;.
\end{equ}
We now use the fact that $\|B\|_\alpha \le_\eps \eps^{-q}$ for 
every $q > 0$ and we apply Lemma~\ref{lem:dtf} with $\d_t f(t) = |B_t|$ (we actually do it component by component), so that 
\begin{equ}
\bigl\{\|Z\|_\infty < \eps\bigr\} \quad\Rightarrow_\eps\quad  \bigl\{\|B\|_\infty \le \eps^{1\over 17}\bigr\}\;,
\end{equ}
say. In order to get the bound on $A$, note that we can again apply the exponential martingale inequality
to obtain that this ``almost implies'' the martingale part in \eref{e:defZ} is ``almost bounded'' in the supremum norm by $\eps^{1\over 18}$,
so that 
\begin{equ}
\bigl\{\|Z\|_\infty < \eps\bigr\} \quad\Rightarrow_\eps\quad  \Bigl\{\Bigl\|\int_0^\cdot A_s\,ds\Bigr\|_\infty \le \eps^{1\over 18}\Bigr\}\;.
\end{equ}
Finally  applying again Lemma~\ref{lem:dtf} with $\d_t f(t) = A_t$, we obtain that 
\begin{equ}
\bigl\{\|Z\|_\infty < \eps\bigr\} \quad\Rightarrow_\eps\quad  \bigl\{\|A\|_\infty \le \eps^{1/80}\bigr\}\;,
\end{equ}
and the claim follows with $r = 1/80$.
\end{proof}

\begin{remark}
By making $\alpha$ arbitrarily close to $1/2$, keeping track of the different norms appearing
in the above argument, and then bootstrapping the argument, it is possible to show that 
\begin{equ}
\bigl\{\|Z\|_\infty < \eps\bigr\} \quad\Rightarrow_\eps\quad  \bigl\{\|A\|_\infty \le \eps^{p}\bigr\}\;\&\; \bigl\{\|B\|_\infty \le \eps^{q}\bigr\}\;,
\end{equ}
for $p$ arbitrarily close to $1/5$ and $q$ arbitrarily close to $3/10$. This seems to be a very small improvement over the exponent
$1/8$ that was originally obtained in \cite{Nor86SMC}, but is certainly not optimal either. 
The main reason why our result is suboptimal is that we move several times back and 
forth between $L^1$, $L^2$, and $L^\infty$ norms.
(Note furthermore that our result is not really comparable
to that in \cite{Nor86SMC}, since Norris used $L^2$ norms in the statements and his assumptions were slightly different from ours.)
\end{remark}

We now have all the necessary tools to prove Theorem~\ref{theo:Hor2}:

\begin{proof}[of Theorem~\ref{theo:Hor2}]
We fix some initial condition $x_0 \in \R^n$ and some unit vector $\eta \in \R^n$. With the notation
introduced earlier, our aim is then to show that
\begin{equ}[e:startingPoint]
\bigl\{\scal{\eta,\cC_{0,1}\eta} < \eps\bigr\} \quad\Rightarrow_\eps\quad \emptyset\;,
\end{equ}
or in other words that the statement $\scal{\eta,\cC_{0,1}\eta} < \eps$ is ``almost false''.
As a shorthand, we introduce for an arbitrary smooth vector field $F$ on $\R^n$ the process $Z_F$ defined by
\begin{equ}
Z_F(t) = \scal{\eta, J_{0,t}^{-1} F(x_t)}\;,
\end{equ}
so that 
\begin{equ}[e:boundC]
\scal{\eta,\cC_{0,1}\eta} = \sum_{k=1}^m \int_0^1 |Z_{V_k}(t)|^2\,dt \ge  \sum_{k=1}^m \Bigl(\int_0^1 |Z_{V_k}(t)|\,dt\Bigr)^2\;.
\end{equ}
The processes $Z_F$ have the nice property that they solve the stochastic differential equation
\begin{equ}[e:ZF]
d Z_F(t) = Z_{[F,V_0]}(t)\,dt + \sum_{i=1}^m Z_{[F,V_k]}(t)\circ dW_k(t)\;,
\end{equ}
which can be rewritten in It\^o form as
\begin{equ}[e:ItoZF]
d Z_F(t) = \Bigl(Z_{[F,V_0]}(t) + \sum_{k=1}^m{1\over 2}Z_{[[F,V_k], V_k]}(t)\Bigr)\,dt + \sum_{i=1}^m Z_{[F,V_k]}(t)\, dW_k(t)\;.
\end{equ}
Since we assumed that all derivatives of the $V_j$ grow at most polynomially, we deduce from
the H\"older regularity of Brownian motion that, provided that the derivatives of $F$ grow at most polynomially fast,
$Z_F$ does indeed satisfy the assumptions on 
its H\"older norm required for the application of Norris's lemma. 
The idea now is to observe that, by \eref{e:boundC}, the left hand side of \eref{e:startingPoint} 
states that $Z_F$ is ``small'' for every $F \in \cV_0$.
One then argues that, by Norris's lemma, if $Z_{F}$ is small for every $F \in \cV_k$
then, by considering \eref{e:ZF}, it follows that $Z_F$ is also small for every $F \in \cV_{k+1}$.
H\"ormander's condition then ensures that a contradiction arises at some stage, since 
 $Z_F(0) = \scal{F(x_0), \xi}$ and there exists $k$ such that $\cV_k(x_0)$ spans all of $\R^n$.

Let us make this rigorous. It follows from Norris's lemma and \eref{e:ItoZF} that one has the almost implication
\begin{equ}
\bigl\{\|Z_F\|_\infty < \eps\bigr\} \quad \Rightarrow_\eps \quad 
\bigl\{\|Z_{[F,V_k]}\|_\infty < \eps^r\bigr\} \;\&\; \bigl\{\|Z_G\|_\infty < \eps^r\bigr\} \;,
\end{equ}
for $k=1,\ldots,m$ and for $G = [F,V_0] + {1\over 2}\sum_{k=1}^m [[F,V_k], V_k]$.
Iterating this bound a second time, this time considering the equation for $Z_G$,
we obtain that 
\begin{equ}
\bigl\{\|Z_F\|_\infty < \eps\bigr\} \quad \Rightarrow_\eps \quad 
\bigl\{\|Z_{[[F,V_k], V_\ell]}\|_\infty < \eps^{r^2}\bigr\} \;,
\end{equ}
so that we finally obtain the implication
\begin{equ}[e:recurrence]
\bigl\{\|Z_F\|_\infty < \eps\bigr\} \quad \Rightarrow_\eps \quad 
\bigl\{\|Z_{[F,V_k]}\|_\infty < \eps^{r^2}\bigr\} \;,
\end{equ}
for $k=0,\ldots,m$.

At this stage, we are basically done. Indeed, combining \eref{e:boundC} with Lemma~\ref{lem:dtf} as above,
we see that
\begin{equ}
\bigl\{\scal{\eta,\cC_{0,1}\eta} < \eps \bigr\} \quad\Rightarrow_\eps\quad
\bigl\{\|Z_{V_k}\|_\infty < \eps^{1/5}\bigr\}\;.
\end{equ}
Applying \eref{e:recurrence} iteratively, we see that for every $k>0$ there exists some $q_k>0$ such that 
\begin{equ}
\bigl\{\scal{\eta,\cC_{0,1}\eta} < \eps \bigr\} \quad\Rightarrow_\eps\quad
\bigcap_{V \in \cV_k} \bigl\{\|Z_{V}\|_\infty < \eps^{q_k}\bigr\}\;.
\end{equ}
Since $Z_V(0) = \scal{\eta, V(x_0)}$ and since there exists some $k>0$ such that $\cV_k(x_0) = \R^n$, the right hand side
of this expression is empty for some sufficiently large value of $k$, which is precisely the desired result.
\end{proof}

\bibliographystyle{Martin}
\bibliography{./refs}

\end{document}